\theoremstyle{plain}
\newtheorem{thm}{Theorem}[section]
\newtheorem{thmIntr}{Theorem}
\newaliascnt{corIntr}{thmIntr}
\newaliascnt{lem}{thm}
\newtheorem{lem}[lem]{Lemma}
\newaliascnt{cor}{thm}
\newtheorem{cor}[cor]{Corollary}
\newaliascnt{prop}{thm}
\newtheorem{prop}[prop]{Proposition}
\newaliascnt{quest}{thm}
\theoremstyle{definition}
\newaliascnt{rem}{thm}
\newtheorem{rem}[rem]{Remark}
\newaliascnt{defn}{thm}
\newtheorem{defn}[defn]{Definition}
\newaliascnt{ex}{thm}
\newtheorem*{claim}{Claim}
\numberwithin{equation}{section}
\def\bP{\ensuremath{\mathbb{P}}}
\def\bQ{\ensuremath{\mathbb{Q}}}
\def\bC{\ensuremath{\mathbb{C}}}
\def\cA{\ensuremath{\mathcal{A}}}
\def\cZ{\ensuremath{\mathcal{B}}}
\def\cC{\ensuremath{\mathcal{C}}}
\def\cD{\ensuremath{\mathcal{D}}}
\def\cJ{\ensuremath{\mathcal{J}}}
\def\cM{\ensuremath{\mathcal{M}}}
\def\cN{\ensuremath{\mathcal{N}}}
\def\cO{\ensuremath{\mathcal{O}}}
\def\cP{\ensuremath{\mathcal{P}}}
\def\cR{\ensuremath{\mathcal{R}}}
\def\cZ{\ensuremath{\mathcal{S}}}
\def\cT{\ensuremath{\mathcal{T}}}
\def\cV{\ensuremath{\mathcal{V}}}
\def\cW{\ensuremath{\mathcal{W}}}
\def\cZ{\ensuremath{\mathcal{Z}}}
\newcommand{\tC}{{\widetilde C}}
\newcommand{\tT}{{\widetilde T}}
\def\tcJ{\ensuremath{\widetilde{\mathcal{J}}}}
\def\tcR{\ensuremath{\widetilde{\mathcal{R}}}}
\def\tcP{\ensuremath{\widetilde{\mathcal{P}}}}
\def\hcZ{\ensuremath{\widehat{\mathcal{Z}}}}
\def\ocM{\ensuremath{\overline{\cM}}}
\def\ocR{\ensuremath{\overline{\mathcal{R}}}}
\def\ocP{\ensuremath{\overline{\mathcal{P}}}}
\def\ocA{\ensuremath{\overline{\mathcal{A}}}}
\newcommand{\lra}{\longrightarrow}
\DeclareMathOperator{\Nm}{Nm}
\DeclareMathOperator{\Pic}{Pic}
\DeclareMathOperator{\Bl}{Bl}
\DeclareMathOperator{\Sym}{Sym}
\definecolor{applegreen}{rgb}{0.55, 0.71, 0.0}
\newcommand{\mylabel}[2]{#2\def\@currentlabel{#2}\label{#1}}
\newcommand{\set}[1]{\left\{#1\right\}}
\newcommand\restr[2]{{
 \left.\kern-\nulldelimiterspace 
 #1 
 \vphantom{\big|} 
 \right|_{#2} 
 }}
\title[Monodromy of the Prym map and semicanonical pencils]{Monodromy of the Prym map and semicanonical pencils in genus 6}
\author{Martí Lahoz}
\address{Martí Lahoz \newline 1. Departament de Matem\`atiques i Inform\`atica, Universitat de Barcelona, Gran Via de les Corts Catalanes, 585, 08007 Barcelona, Spain \newline 2. Centre de Recerca Matemàtica, Edifici C, Campus Bellaterra, 08193 Bellaterra, Spain }
\email{marti.lahoz@ub.edu}
\urladdr{\url{http://www.ub.edu/geomap/lahoz/}}
\author{Juan Carlos Naranjo}
\address{Juan Carlos Naranjo \newline 1. Departament de Matem\`atiques i Inform\`atica, Universitat de Barcelona, Gran Via de les Corts Catalanes, 585, 08007 Barcelona, Spain \newline 2. Centre de Recerca Matemàtica, Edifici C, Campus Bellaterra, 08193 Bellaterra, Spain }
 \email{jcnaranjo@ub.edu}
 \urladdr{\url{http://webgrec.ub.edu/webpages/000006/ang/jcnaranjo.ub.edu.html}}
\author{Andrés Rojas}
\address{Andrés Rojas \newline Institut für Mathematik, Humboldt-Universität zu Berlin, Unter den Linden 6, 10099 Berlin, Germany}
\curraddr{Departament de Matem\`atiques i Inform\`atica, Universitat de Barcelona, Gran Via de les Corts Catalanes, 585, 08007 Barcelona, Spain}
\email{andresrojas@ub.edu} 
\urladdr{\url{https://www.ub.edu/geomap/rojas/}}
\author{Irene Spelta}
\address{Irene Spelta \newline 
 Centre de Recerca Matemàtica, Edifici C, Campus Bellaterra, 08193 Bellaterra, Spain }
\curraddr{Institut für Mathematik, Humboldt-Universität zu Berlin, Unter den Linden 6, 10099 Berlin, Germany}
\email{irene.spelta@hu-berlin.de}
\urladdr{\url{https://amor.cms.hu-berlin.de/~speltair/}}
\begin{document}

\begin{abstract}
The Prym map $\cP_6$ in genus 6 is dominant and generically finite of degree 27.
When restricted to the divisor of curves with an odd semicanonical pencil $\mathcal{T}_6^o$, it is still generically finite, but of degree strictly smaller. 
In this paper, we prove that $\cP_6$ restricted to $\mathcal{T}_6^o$ is birational and that the monodromy group over the image of $\mathcal{T}_6^o$ is the Weyl group $WD_5$.
Thus, there are two other irreducible divisors in the moduli space of Prym curves $\cR_6$ and the degree of $\cP_6$ restricted to them is 10 and 16.
Moreover, we study the geometry of the divisor where $\cP_6$ has degree 10. 
\end{abstract}

\keywords{Prym varieties, theta-characteristics, Prym map, special divisors in the moduli space, monodromy.}
\subjclass[2020]{14H40, 14H45, 14H51, 14K10}

\maketitle

\setcounter{tocdepth}{1}

\section{Introduction}

It is a classical fact, dating back to Wirtinger, that a general principally polarized abelian variety (ppav) of dimension $\leq5$ is a Prym variety. In other words, the \emph{Prym map} $\cP_g:\cR_g\to\cA_{g-1}$ between the moduli spaces $\cR_g$ (of étale double covers of smooth genus $g$ curves) and $\cA_{g-1}$ (of $(g-1)$-dimensional ppavs) is dominant for $g\leq 6$. This allows to understand ppavs of dimension $\leq5$ in terms of geometry of curves, with applications ranging from unirationality of their moduli space (\cite{do_unir, verra}) to the geometry of theta divisors (\cite{izaditheta4, izaditheta5, fgsmv}).

In the case $g=6$, the Prym map is generically finite of degree 27, as proved by Donagi-Smith \cite{donsmith}. Furthermore its monodromy group equals the Weyl group $WE_6\subset \mathfrak{S}_{27}$, and Donagi's tetragonal construction endows its general fiber with the incidence structure of the 27 lines on a smooth cubic surface (\cite{do_tetr,do_fibres}).
It turns out that the rich geometry of the map $\cP_6$ can be used to describe effective divisors on both moduli spaces $\cR_6$ and $\cA_5$, and the monodromy group of $\cP_6$ over such divisors becomes an interesting question.

The purpose of this note is to study this problem in a concrete geometric example. More precisely, let $\cT_g\subset\cM_g$ denote the divisor of (isomorphism classes of) curves $C$ with a \emph{semicanonical pencil}\footnote{In the literature, this is frequently called a \emph{vanishing theta-null}.} (i.e., with a theta-characteristic $L\in\Pic^{g-1}C$ such that $h^0(C,L)$ is even and positive).
The preimage of $\cT_g$ in $\cR_g$ (via the forgetful map $\pi:\cR_g\to\cM_g$) decomposes as the union of two divisors, according to a parity condition. 
We denote these two divisors by $\cT^e_g$ and $\cT^o_g$ (for \emph{even} and \emph{odd} Prym semicanonical pencils, respectively), which are known to be irreducible (\cite{maroj}).

Recent work by the first three authors \cite{lnr} addresses the study of the Prym map restricted to $\cT^e_g$ and $\cT^o_g$. Whereas the case of $\cT^e_g$ is relatively elementary, the analysis for $\cT^o_g$ is substantially harder and is often related to rich geometry. For instance, the study of $\cP_5|_{\cT^o_5}$ reveals enumerative properties of lines on cubic threefolds (\cite[Section 6]{lnr}). 

In the case $g=6$, it is proved that $\cP_6|_{\cT_6^o}$ is generically finite onto its image, of degree strictly less than $27$ (in contrast to $\cP_6|_{\cT^e_6}$). This opens the way to consider other effective divisors on $\cR_6$, namely the irreducible components of $\cP_6^{-1}\,(\overline{\cP_6(\cT_6^o)})$. The degree of $\cP_6$ restricted to each component, as well as the monodromy group of $\cP_6$ over $\overline{\cP_6(\cT_6^o)}$, arise as natural questions in this context.

Our main result is:

\begin{thmIntr}\label{mainthm}
Let $\cZ\subset\cA_5$ denote the divisor obtained as the closure of $\cP_6(\cT^o_6)$. Then the preimage $\cP_6^{-1}(\cZ)$ consists of three irreducible components, namely:

\begin{enumerate}[{\rm (1)}]
 \item $\cT^o_6$, which is birational to $\cZ$: $\deg\left(\cP_6|_{\cT^o_6}\right)=1$.

 \item $\pi^{-1}\cD$, where  
  \[
  \;\;\;\cD=\set{[C]\in\cM_6\;|\;\exists M\in W^1_4(C)\text{ and }x,y,z\in C\text{ with }\omega_C\otimes M^{-1}\cong\cO_C(2x+2y+2z)}
  \]
is the divisor whose general point is a genus 6 curve admitting a plane sextic model with a tritangent line. The equality $\deg\left(\cP_6|_{\pi^{-1}\cD}\right)=10$ holds.

 \item A third component $\mathcal{L}$, which satisfies $\deg\left(\cP_6|_{\mathcal{L}}\right)=16$.

 \end{enumerate}
 
Furthermore, the monodromy group of $\cP_6^{-1}(\cZ)\to\cZ$ equals $WD_5$.
\end{thmIntr}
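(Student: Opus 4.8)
The plan is to deduce the full statement from the degree-$27$ monodromy $WE_6$ of the unrestricted Prym map $\cP_6$ together with the enumerative incidence structure on its fibers. Recall that, by Donagi's tetragonal construction, a general fiber $\cP_6^{-1}([A])$ carries the incidence geometry of the $27$ lines on a smooth cubic surface $S_{[A]}$, with monodromy group $WE_6$ acting as the full group of symmetries of that configuration. The first task is to identify, for a general point $[A]\in\cZ$, \emph{which} of the $27$ points of the fiber lie on the three components of $\cP_6^{-1}(\cZ)$. Since $\cZ$ is a divisor, $\cP_6^{-1}(\cZ)$ is a divisor in $\cR_6$, and by Theorem~\ref{mainthm}(1)--(3) it splits as $\cT^o_6\cup\pi^{-1}\cD\cup\cL$ with local degrees $1$, $10$, $16$ summing to $27$. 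So over a general $[A]\in\cZ$ the $27$ lines of $S_{[A]}$ are partitioned into one distinguished line $\ell_0$, a set of $10$ lines, and a set of $16$ lines. The only $WE_6$-orbit partition of the $27$ lines into pieces of sizes $1+10+16$ is: a fixed line $\ell_0$, the $10$ lines meeting $\ell_0$, and the $16$ lines skew to $\ell_0$ (equivalently, the double-six structure associated to $\ell_0$). This identification is the geometric heart of the argument; it must be justified by the actual geometry of the semicanonical pencil — i.e.\ one shows directly that the odd semicanonical pencil $L$ on $[C]\in\cT^o_6$ singles out, via the Prym–Brill–Noether / tetragonal correspondence, exactly one line in $S_{[A]}$, and that the tetragonal construction sends $[C]$ precisely to the $10$ neighbours of that line (producing the plane-sextic-with-tritangent-line curves of $\cD$) and to the $16$ skew lines (producing $\cL$). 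Equivalently, one verifies that the birationality in (1) means $\ell_0$ varies as a genuine section over $\cZ$, and then the other two components are forced by the incidence structure.

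Granting that the stabilizer of $\ell_0$ is the relevant subgroup, the monodromy computation is then algebra. The subgroup of $WE_6$ stabilizing one of the $27$ lines is isomorphic to $WD_5$ (the Weyl group of $D_5$), of order $2^4\cdot 5! = 1920$, with index $27$ in $WE_6$ (whose order is $51840$). Its action on the remaining $26$ lines is exactly the action of $WD_5$ on the $10+16$ lines: the $10$ lines meeting $\ell_0$ form the orbit corresponding to the pairs from a $5$-element set ($D_5$ acting on $\pm e_i \pm e_j$ up to the relevant identification, the "$\binom{5}{2}=10$" short roots / ambient coordinates), and the $16$ skew lines form the half-spin orbit on which $WD_5$ acts through its two half-spin representations — the standard "$16$ lines of a double-six are permuted like the $16$ vertices of the $5$-cube modulo the even/odd parity class". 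I would carry this out by: (i) fixing the standard model $E_6 \supset D_5$, (ii) listing the $27$ weights of the minuscule representation and checking that $D_5$-conjugation preserves the $1+10+16$ split and acts transitively on each block, and (iii) concluding that the image of the monodromy of $\cP_6^{-1}(\cZ)\to\cZ$ in $\mathfrak S_{10}\times\mathfrak S_{16}$ is contained in $WD_5$.

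For the reverse inclusion — that the monodromy is \emph{all} of $WD_5$, not a proper subgroup — I would use a restriction/specialization argument: the monodromy of $\cP_6^{-1}(\cZ)\to\cZ$ is the image in $\Stab_{WE_6}(\ell_0)=WD_5$ of the subgroup of $\pi_1$ of (an open subset of) $\cZ$, and I claim it surjects. One clean way: the monodromy group of $\cP_6^{-1}(\cZ)\to\cZ$ contains the monodromy of the connected finite cover $\pi^{-1}\cD \to \cZ$ of degree $10$ and of $\cL\to\cZ$ of degree $16$; irreducibility of $\cT^o_6$, $\cD$ (hence $\pi^{-1}\cD$) and $\cL$ — the first two known by \cite{maroj} and by the explicit description of $\cD$, the last to be proved along the way — shows each of these covers is connected, so the monodromy acts transitively on the $10$-set and on the $16$-set. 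A transitive subgroup of $WD_5$ that also stabilizes the product structure $\{10\}\sqcup\{16\}$ and projects onto transitive subgroups of each factor is, by a short group-theoretic check (using that $WD_5$ has $\mathfrak A_5$-type composition factors and that the two blocks are "glued" by the outer structure of $D_5$), forced to be all of $WD_5$; alternatively, and perhaps more robustly, exhibit explicit loops in $\cZ$ — e.g.\ Lefschetz-type vanishing cycles obtained by degenerating $[C]\in\cT^o_6$ so that two of the $10$ tritangent configurations collide — that realize the generating reflections of $WD_5$.

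\textbf{Main obstacle.} The crux is step one: rigorously matching the moduli-theoretic trichotomy $\cT^o_6 \,/\, \pi^{-1}\cD \,/\, \cL$ with the combinatorial trichotomy "$\ell_0$ / its $10$ neighbours / its $16$ non-neighbours" inside the $27$-line configuration. Getting the degrees $1,10,16$ to \emph{match up with the correct orbits} (rather than merely to sum to $27$) requires tracing the tetragonal construction through the semicanonical pencil and identifying the plane-sextic-with-tritangent-line description of $\cD$ with the "meets $\ell_0$" condition; once that dictionary is in place, the monodromy statement follows from the standard $WE_6 \supset WD_5$ representation theory recalled above together with the irreducibility/connectedness inputs.
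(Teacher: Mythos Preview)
Your outline for the containment ``monodromy $\subseteq WD_5$'' is correct and matches the paper: once $\cP_6|_{\cT^o_6}$ is shown to be birational onto $\cZ$ (which the paper establishes by explicit analysis of the tetragonal construction, along the lines you sketch in your ``crux'' paragraph), the element of $\cT^o_6$ in each fiber is monodromy-invariant and the group lands in the stabilizer $WD_5$ of a line.

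The genuine gap is the reverse inclusion. Your argument is circular: you invoke irreducibility of $\pi^{-1}\cD$ and of $\cL$ to obtain transitivity on the $10$- and $16$-sets, but neither irreducibility is available as input. In the paper's logic, irreducibility of $\cD$ (hence of $\pi^{-1}\cD$) and the fact that the remaining $16$ points constitute a \emph{single} component $\cL$ are \emph{deduced from} the monodromy computation, not fed into it; see the remark following \autoref{prop:gen1Tritang} and the final deduction at the end of \autoref{sec:monodromy}. You concede that irreducibility of $\cL$ is ``to be proved along the way'' but give no mechanism. Worse, even granting transitivity on both blocks, your ``short group-theoretic check'' is false as stated: the index-$2$ subgroup $(\bZ/2\bZ)^4\rtimes A_5\subset WD_5$, and indeed already $(\bZ/2\bZ)^4\rtimes C_5$ of order $80$, acts transitively on both the ten vector weights $\{\pm e_i\}$ and the sixteen half-spin weights. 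So transitivity on the two orbits does not by itself force the full $WD_5$.

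The paper obtains monodromy $=WD_5$ by a completely different, geometric route that your proposal does not anticipate. It uses the inclusion $\cJ\subset\cZ$ (\autoref{cor:ScontainsJ}), blows up $\cZ$ along $\cJ$, and shows---via the transversality of $\cR_6^{trig}\cap\cT^o_6$ (\autoref{lem:trans}, proved with an explicit one-parameter family of $(3,4)$-curves on $\bP^1\times\bP^1$) and the dominance of $\cP_6'\colon\cR_6^{trig}\cap\cT^o_6\to\cJ$ (\autoref{prop:dominant}, via an intersection computation on $X^{(4)}$)---that the exceptional fiber over a general $JX$ is a pencil of degree-$4$ del Pezzo surfaces containing the canonical model of $X$, and that \emph{every} isomorphism class of smooth degree-$4$ del Pezzo surface arises in this family. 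Since the monodromy of the $16$ lines on such surfaces is the full $WD_5$, loops in $\hcZ\cap\tcJ$ already realize all of $WD_5$. Your proposed Lefschetz-loop alternative is too vague to substitute for this argument.
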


Let us sketch briefly the argument.
Since $\cZ$ is not the branch divisor of $\cP_6$, the general fiber of $\cP_6^{-1}(\cZ)$ can be identified with the set of 27 lines on a smooth cubic surface: two elements in the fiber are tetragonally related if, and only if, the corresponding lines intersect.

In a first part, we apply the tetragonal construction to a general element $(C,\eta)\in\cT^o_6$; independently of the chosen $g^1_4$ on $C$, one always recovers two elements in $\pi^{-1}\cD$ (with their \emph{tritangent} $g^1_4$).
Conversely, the tetragonal construction applied to a general element in $\pi^{-1}\cD$ (after choosing a tritangent $g^1_4$) returns an element in $\cT^o_6$ and another element in $\pi^{-1}\cD$ (again with a tritangent $g^1_4$).

Since the general curve in $\cD$ has a unique tritangent $g^1_4$, this implies $\deg\left(\cP_6|_{\cT^o_6}\right)=1$, $\pi^{-1}\cD\subset \cP_6^{-1}(\cZ)$ and $\deg\left(\cP_6|_{\pi^{-1}\cD}\right)=10$. In particular, the monodromy group of $\cP_6^{-1}(\cZ)\to\cZ$ must be contained in $WD_5$ (the stabilizer in $WE_6$ of a line).

In a second part, we prove that the monodromy is the entire $WD_5$. We exploit the fact that the Jacobian locus is contained in $\cZ$ to consider an appropriate blown-up map $\widetilde{\cP_6^{-1}(\cZ)}\to\widetilde{\cZ}$, for which we can determine $WD_5$ as its monodromy group. The idea is inspired by \cite[Theorem~4.4]{do_fibres}, but does not follow from the results there; some other arguments, involving degree 4 del Pezzo surfaces and transversality of $\cT^o_6$ with the locus of coverings of trigonal curves, must be employed.

\subsection*{Acknowledgements.} We are grateful to Jieao Song for providing the content of \autoref{althypersurface}. 
We also thank Carlos d'Andrea and Alicia Dickenstein for useful information on polynomial algebra.

The four authors were partially supported 
by the Spanish MINECO research project PID2023-147642NB-I00, 
by the Departament de Recerca i Universitats de la Generalitat de Catalunya (2021 SGR 00697),
and by the Spanish State Research Agency, through the Severo Ochoa and María de Maeztu Program for Centers and Units of Excellence in R\&D (CEX2020-001084-M).
The third author was also supported by the ERC Advanced Grant SYZYGY, funded by the European Research Council.
The fourth author is a member of GNSAGA (INdAM) and has been partially supported by INdAM-GNSAGA project CUP E55F22000270001.

\section{Preliminaries}\label{sec:Prel}
In this section, we review some basics that we will need later. Through the paper, we work over the field of the complex numbers.

\subsection{The Prym map and the divisors of Prym semicanonical pencils}
Consider the moduli space of \emph{smooth Prym curves} of genus $g$
\[
\cR_g=\{(C,\eta) \mid [C]\in \mathcal M_g,\; \eta \in JC[2]\setminus\{\cO_C\}\}/\cong
\]
parametrizing étale, irreducible double covers of smooth genus $g$ curves.
Given $(C,\eta)\in\cR_g$ and $f: \tC\to C$ its associated cover, one can define the \emph{Prym variety} $P:=P(C,\eta)$ as the connected component of the kernel of the induced norm map $\Nm_f:J\tC\to JC$ that contains $0_{J\tC}$. 
The principal polarization on $J\tC$ restricts to twice a principal polarization on $P$ (\cite[Section 3]{mu}), thus we obtain the \emph{Prym map}
\[ \cP_g: \cR_g\to \cA_{g-1}.\]

Let $\cT_g\subset \cM_g$ be the irreducible divisor given by curves admitting a \emph{semicanonical pencil}, i.e. a theta-characteristic $L\in\Pic^{g-1}C$ such that $h^0(C,L)$ is even and positive. In the literature our semicanonical pencils are also called \emph{vanishing theta-nulls}, but later we will use the theta-null divisor in the context of abelian varieties, so we prefer to use this name to avoid any confusion.

By using the forgetful map $\pi:\cR_g \to \cM_g$, we consider the following irreducible divisors:
\[
\begin{aligned}
&\cT^e_g=\set{(C,\eta)\in\cR_g\mid C\text{ has a semicanonical pencil $L$ with $h^0(C,L\otimes\eta)$ even}}\\
&\cT^o_g=\set{(C,\eta)\in\cR_g\mid C\text{ has a semicanonical pencil $L$ with $h^0(C,L\otimes\eta)$ odd}}.
\end{aligned}
\] 
Thanks to the description of the singularities of the theta divisor of a Prym variety provided in \cite[Section 7]{mu}, it is known that $\cP_g$ maps $\cT_g^e$ to the divisor $\theta_{null}\subset \cA_{g-1}$ of principally polarized abelian varieties whose (symmetric) theta divisor contains a singular 2-torsion point of even multiplicity.
More precisely, let \[P^+:= \set{M\in \Pic^{2g-2}\tC\mid \Nm_f(M)=\omega_C \ \text{and } h^0(\tC, M) \text{ is even} }\]
be a presentation of the Prym variety in $\Pic^{2g-2}\tC$.
Then $\Sigma^+= \{M \in P^+\mid h^0(\tC, M) \geq 2\}$ is a ``canonical'' presentation of the theta divisor of $P$.
This shows that an even semicanonical pencil provides an easy example of a singular (exceptional) point in $\Sigma^+$.
The 2-torsion property follows from the fact that 2-torsion points of $P$ in $\Pic^0\tC$ are, in $P^+$, the theta characteristics of $\tC$ lying in $P^+$.

In \cite[Theorems A-B]{lnr}, the first three authors showed that $\cT_g^e=\cP_g^{-1}(\theta_{null})$ for $g\geq 3$ and that $\cT^o_g$ dominates $\cA_{g-1}$ as long as $\dim \cT^o_g\geq \cA_{g-1}$.
Furthermore, they proved that if $g\geq 6$ then the restrictions $\cP_6|_{\cT^e_6}$ and $\cP_6|_{\cT^o_6}$ are generically finite onto their image.
In particular, when $g=6$, the Prym map $\cP_6|_{\cT^e_6}$ has generically degree 27, whereas $\cP_6|_{\cT^o_6}$ has generically degree strictly smaller than 27. 

\subsection{Brill-Noether loci on Prym varieties}
Given an étale double cover $(C, \eta)\in\cR_g$, one can consider a Brill-Noether theory for the Prym variety $P$.
This is due to Welters (\cite{welters}), who defined the \emph{Prym-Brill-Noether loci} as follows:
\[
V^r(C, \eta):=\set{M\in \Pic^{2g-2}\tC\mid \Nm_f(M)=\omega_C, \, h^0(\tC, M)\geq r+1, \, h^0(\tC, M)\equiv r+1 \text{(mod } 2) },
\] 
with the scheme structure defined by $P^+ \cap W^{r}_{2g-2}(\tC) $ when $r$ is odd, and by $P^- \cap W^{r}_{2g-2}(\tC) $ when $r$ is even (here $P^-$ is defined analogously to $P^+$, by requiring an odd number of sections). 

In particular, we have $V^0(C, \eta)=P^-$ and $V^1(C, \eta)=\Sigma^+$. Moreover, being $T(\tC)$ the theta-dual of the Abel-Prym curve $\tC$ inside $P$ (which parametrizes the translates of $\tC\subset P$ contained in the theta divisor), we have $V^2(C, \eta)=T(\tC)$ if $g\geq 4$ and $C$ is not hyperelliptic (\cite{ln}).

If $(C,\eta )\in \cT^o_g$ and $L$ is a semicanonical pencil on $C$ with $h^0(L\otimes\eta)$ odd, then $f^*L\in V^2(C,\eta)$.
Moreover $f^*L$ is a singular point of $V^2(C,\eta)$ (see for instance \cite[Lemma~2.2]{lnr}); if $P^-$ is identified with $P(C,\eta)$ by using a theta-characteristic of $\tC$ lying in $P^-$, then $V^2(C,\eta)$ becomes a symmetric subvariety of $P(C,\eta)$ which is singular at a 2-torsion point.

\subsection{Trigonal and tetragonal constructions, and the Prym map in genus 6}\label{sec:tri_tetr_con}
The classical \emph{trigonal construction} due to Recillas \cite{rec} provides a bijection between étale double covers of genus $g$ trigonal curves and tetragonal curves of genus $g-1$.
More precisely, the Prym variety attached to a cover $\tT\to T$ of a trigonal curve is isomorphic, as a principally polarized abelian variety, to the Jacobian of a curve $X$ equipped with a $g^1_4$ $M$.
Moreover, one can recover $\tT$ (with its natural involution) from $JX$ as the locus of elements $p+q\in X^{(2)}$ such that $h^0(X,M(-p-q))>0$.

Similarly, Donagi \cite{do_tetr} found the so-called \emph{tetragonal construction}, which produces from an \'etale double cover $\tC_1 \to C_1$ of a genus $g$ curve $C_1$ with a fixed $g^1_4$ $M_1$, the data of two more such covers of tetragonal curves $(\tC_i\to C_i, M_i)$, $i=2,3$, such that the three Prym varieties $P(\tC_i\to C_i)$ are isomorphic.
He reformulated in \cite[Lemma 5.5]{do_fibres} this construction in the following way, which is more convenient for our purposes: 

\begin{lem}\label{trigtetr}
There is a bijection between the following two sets of data:
\begin{enumerate}
    \item Triples $(T,N,W)$, where $T\in\cM_{g+1}$ is a trigonal curve, $N$ is a $g^1_3$ on $T$ and $W=\{0,\mu_1,\mu_2,\mu_3\}\subset JT[2]$ is a totally isotropic subgroup with respect to the Weil pairing.
    
    \item A tetragonally related triple $\{(C_i,\eta_i,M_i)\}_{i=1,2,3}$ with $(C_i,\eta_i)\in\cR_g$ and $M_i$ a $g^1_4$ on $C_i$.
\end{enumerate}
\end{lem}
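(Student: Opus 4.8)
The plan is to build the bijection out of Recillas' trigonal construction, together with the elementary dictionary between rank-$2$ isotropic subgroups of $JT[2]$ through a fixed $\mu_1$ and nonzero $2$-torsion points of the associated Prym variety. Recall that Recillas' construction, in the form recalled above, is already a bijection between pairs $(\tT\to T,N)$ — an étale double cover of a genus $g+1$ trigonal curve with a chosen $g^1_3$ — and pairs $(X,M)$ — a genus $g$ curve with a $g^1_4$ — under which $P(\tT\to T)\cong JX$ as principally polarized abelian varieties, and it is reversible (cf.\ the reconstruction of $\tT$ with its involution from $(X,M)$ stated above). It then remains only to account for the extra $2$-torsion.

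\textbf{The forward map.} Given $(T,N,W)$ with $W=\{0,\mu_1,\mu_2,\mu_3\}$, let $f_i\colon\tT_i\to T$ be the étale double cover attached to $\mu_i$, and let $(C_i,M_i)$, together with an isomorphism $P(\tT_i\to T)\cong JC_i$, be produced by Recillas' construction. For $\{i,j,k\}=\{1,2,3\}$ one has $f_i^*\mu_j=f_i^*\mu_k$ (since $\mu_j$ and $\mu_k$ differ by $\mu_i\in\ker f_i^*$); this class lies in $\ker\Nm_{f_i}$, it lies in the identity component $P(\tT_i\to T)$ precisely because $\langle\mu_i,\mu_j\rangle$ is trivial, i.e.\ because $W$ is isotropic, and it is nonzero since $\mu_j\notin\langle\mu_i\rangle$. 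Carrying it through the Recillas isomorphism produces $\eta_i\in JC_i[2]\setminus\{0\}$, and I set $\Phi(T,N,W):=\{(C_i,\eta_i,M_i)\}_{i=1,2,3}$.

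\textbf{Recovering $(T,N,W)$ from the triple.} A dimension count shows that $f_1^*$ restricts to a surjection $\mu_1^{\perp}\twoheadrightarrow P(\tT_1\to T)[2]$ with kernel $\langle\mu_1\rangle$: indeed $|\mu_1^{\perp}|=2^{2g+1}$ and $|P(\tT_1\to T)[2]|=2^{2g}$, while $f_1^*\mu\in P(\tT_1\to T)$ for every $\mu\in\mu_1^{\perp}$ by the criterion just used. Hence from $(C_1,\eta_1,M_1)$ one recovers $(T,N,W)$ canonically: running Recillas' construction backwards on $(C_1,M_1)$ yields $(\tT_1\to T,N)$ and $\mu_1\in JT[2]$, and then $\{\mu_2,\mu_3\}=(f_1^*)^{-1}(\eta_1)\cap\mu_1^{\perp}$, the resulting $W=\{0,\mu_1,\mu_2,\mu_3\}$ being automatically totally isotropic because $\langle\mu_2,\mu_1+\mu_2\rangle=\langle\mu_2,\mu_1\rangle$ is trivial. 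This makes $\Phi$ injective, and surjectivity follows by reversing the same steps — provided one knows that $\Phi$ produces tetragonally related triples and that such a triple is determined by any one of its members (a property of Donagi's construction).

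\textbf{The main obstacle.} What is left — and what I expect to be the heart of the matter — is precisely that $\Phi(T,N,W)$ is a \emph{tetragonally related} triple, equivalently that Donagi's tetragonal construction applied to $(\tC_1\to C_1,M_1)$ returns $(\tC_2\to C_2,M_2)$ and $(\tC_3\to C_3,M_3)$, and in particular that $P(C_1,\eta_1)$, $P(C_2,\eta_2)$ and $P(C_3,\eta_3)$ are isomorphic as ppavs. My plan for this is to work inside the Jacobian of the étale $(\bZ/2)^2$-cover $\widehat{T}\to T$ determined by $W$, which decomposes up to isogeny as $JT\times\prod_i P(\tT_i\to T)$: the curves $\tC_i$ and their Prym varieties can be located in this picture, the common ppav appears as an isogeny factor, and the $\mathfrak{S}_3$-symmetry permuting $\mu_1,\mu_2,\mu_3$ should match the discrete data (the $g^1_4$'s and the double covers) with Donagi's recipe. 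This identification is essentially \cite[Lemma 5.5]{do_fibres}, resting on \cite{do_tetr,rec}, and for this step one may alternatively appeal directly to those references.
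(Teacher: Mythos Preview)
The paper does not give a proof of this lemma: it is quoted as Donagi's reformulation \cite[Lemma~5.5]{do_fibres}, followed only by the one-line explanation that $(T,N,\mu_i)\leftrightarrow(C_i,M_i)$ via Recillas and $\mu_j\in\langle\mu_i\rangle^\perp\leftrightarrow\eta_i$ via Mumford's exact sequence for $P[2]$. Your proposal spells out exactly this dictionary in more detail and correctly isolates the substantive step (that the resulting triple is tetragonally related) as the content of \cite{do_tetr,do_fibres}, so your approach is the same as the paper's.
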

In this bijection, the triplet $(T, N, \mu_i)$ corresponds to $(C_i, M_i)$ via Recillas' construction, whereas $\mu_j\in \langle\mu_i\rangle^{\perp}$ ($j\neq i$) corresponds to $\eta_i\in JC_i[2]$ via Mumford's exact sequence describing the 2-torsion subgroup of a Prym variety (\cite[Section 3]{mu}).

Finally, let us recall the main properties of the Prym map in genus $6$, as discussed in \cite{donsmith} and \cite{do_fibres}:

\begin{thm} \label{prymmapgenus6}
The Prym map $\cP_6: \cR_6 \to \mathcal A_5 $ is dominant and generically finite, of degree $27$. Moreover:
\begin{enumerate}
 
 \item 
 The general fiber of $\cP_6$ can be identified with the set of 27 lines on a smooth cubic surface, and two elements in the fiber are tetragonally related if, and only if, the corresponding lines intersect. 
 \item\label{mongenus6} The monodromy group of $\cR_6$ over $\mathcal A_5$ is the Weyl group $WE_6$, the symmetry group of the incidence of the $27$ lines on a smooth cubic surface.
\end{enumerate}
\end{thm}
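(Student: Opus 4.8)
The statement to prove is \autoref{prymmapgenus6}, which is really a recollection of Donagi--Smith and Donagi's results, so a ``proof proposal'' here is a plan for \emph{reconstructing} these classical facts rather than proving something new. I will sketch how I would organize such a proof.

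\textbf{Strategy.} The plan is to establish the degree via an explicit geometric model of the general fiber, and then to compute the monodromy by exhibiting enough monodromy transformations. First I would recall the Donagi--Smith description: a general ppav $(A,\Theta)\in\cA_5$ is a Prym variety, and the fiber $\cP_6^{-1}(A,\Theta)$ is modeled by a configuration coming from a cubic surface. Concretely, one uses the fact that a general $(A,\Theta)$ arises from a \emph{Prym-embedded curve} of genus $6$, or equivalently one exploits that the Prym--Brill--Noether locus $V^2(C,\eta)$ (the theta-dual of the Abel--Prym curve) for $(C,\eta)$ in the fiber has a rigid geometric meaning depending only on $A$. The key input is the tetragonal construction (\autoref{trigtetr}): each element of the fiber carries finitely many $g^1_4$'s, and the tetragonal correspondence organizes the fiber combinatorially.

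\textbf{Degree 27.} To pin down the degree I would use the global geometry on the target. Donagi--Smith's approach is to specialize $(A,\Theta)$ to a point where the fiber is transparently the 27 lines --- e.g., over the locus of intermediate Jacobians of cubic threefolds, or over a suitable point where one can match the $\cR_6$ data with conic bundle structures on a cubic threefold. The essential steps: (i) verify $\cP_6$ is generically finite (a tangent space / Prym--Torelli infinitesimal computation, or invoke \cite{donsmith}); (ii) identify a distinguished $(A,\Theta)$ where $\cP_6^{-1}(A,\Theta)$ is in explicit bijection with the $27$ lines on a cubic surface $S$, with the tetragonal relation matching line incidence; (iii) argue by the irreducibility/connectedness of $\cR_6$ and the openness of the locus where the fiber has $27$ reduced points that the generic degree is exactly $27$. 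The incidence structure in point (1) then follows because the tetragonal construction is a well-defined correspondence on the whole fiber and must match the unique $WE_6$-invariant incidence structure on a $27$-element set with the right combinatorics.

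\textbf{Monodromy $=WE_6$.} For part (2), the monodromy group $G\subseteq \mathfrak{S}_{27}$ preserves the tetragonal incidence structure, hence $G\subseteq \Aut(\text{27 lines})=WE_6$. For the reverse inclusion I would follow \cite[Theorem 4.4]{do_fibres}: produce explicit loops in the base whose monodromy realizes generators of $WE_6$. The standard device is to degenerate to boundary or special loci --- covers of trigonal curves (where Recillas' construction and \autoref{trigtetr} apply and the monodromy of the $g^1_3$/$2$-torsion data is computable), or the Jacobian locus --- and to analyze local monodromy around the corresponding discriminant components. One shows the group is transitive (connectedness of $\cR_6$), then that it contains a transposition-like element from a simple vanishing-cycle degeneration, and finally that it is large enough (e.g.\ acts with the right orbits on pairs) to force $G=WE_6$, using that $WE_6$ is the only subgroup of $\Aut(\text{27 lines})$ that is transitive and contains such an element compatibly with the incidence.

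\textbf{Main obstacle.} The hard part is the second half: showing the monodromy is \emph{all} of $WE_6$ rather than a proper subgroup, since this requires genuinely global/topological input --- identifying explicit vanishing cycles and local monodromies at carefully chosen degenerations, and verifying these generate. The degree count and the incidence structure, by contrast, are comparatively formal once the cubic-surface model over one good point is in hand. Since this is a recollection, in practice I would simply cite \cite{donsmith, do_tetr, do_fibres} for both parts and use this theorem as a black box in what follows.
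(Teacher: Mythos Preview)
Your proposal is appropriate: the paper does not prove \autoref{prymmapgenus6} at all. It is stated in the preliminaries section with the preamble ``let us recall the main properties of the Prym map in genus $6$, as discussed in \cite{donsmith} and \cite{do_fibres},'' and is used thereafter as a black box. There is therefore no proof in the paper to compare against; your closing remark --- that in practice one simply cites \cite{donsmith, do_tetr, do_fibres} --- is exactly what the paper does, and your sketch of how those original arguments run (degree via specialization, incidence via the tetragonal construction, monodromy $\supseteq WE_6$ via Donagi's degeneration analysis in \cite[\S4]{do_fibres}) is a fair summary of the literature.
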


\subsection{Compactifications}\label{sec:Compactification}

In order to compute complete fibres of the Prym map, it is convenient to compactify the involved moduli spaces. Abusing notation, $\cT_6^o$ will also denote the closure of this divisor in $\overline {\cR_6}$ (the Deligne-Mumford compactification) or in $\cR_6'$ (Beauville's partial compactification by admissible covers).
We will denote by $\cP_6'\colon \cR_6'\to \cA_5$ Beauville's proper extension of the Prym map (\cite{be_invent}).

Following \cite{fgsmv}, we consider the rational Prym map $\ocP:\ocR_6\dasharrow\ocA_5$ obtained by extending the Prym map to the open subset of $\ocR_6$ lying over the locus of stable curves in $\ocM_6$ with at most one node. 
Here $\ocA_5$ stands for the perfect cone compactification of $\cA_5$, whose rational Picard group $\Pic(\ocA_5)_\bQ$ is generated by the Hodge class $L$ and the class $D$ of the irreducible boundary divisor. In \cite[Proposition~7.3]{lnr}, it was proved that the class of the divisor $\cZ=\overline{\cP_6(\cT^o_6)}$ appearing in \autoref{mainthm} is proportional to $10584L-1320D$.

Arguing as in loc. cit., it is easy to conclude that the fiber of $\cP_6$ at a general point of $\cZ$ is contained in $\cR_6$ and consists of $27$ coverings, all different to each other.

\section{Projective geometry of trigonal curves of genus 6}\label{sec:divisorD}

In this section, we present some results on genus $6$ curves, all of them related with trigonal curves. 
First, we consider the locus $\cD$ in $\cM_6$ of curves which can be represented as plane sextics (with four nodes) with a tritangent line. We will easily prove that the trigonal locus $\cM_6^{trig}\subset\cM_6$ is contained in $\cD$. As we will see in the next section, these sextics appear naturally when we apply the tetragonal construction to elements in $\cT^o_6$. 

\begin{defn}
Let $\cD'$ be the moduli space of pairs $(C,M)$ with $C\in \cM_6$ and $M\in \Pic ^4 C$, such that $h^0(C,M)=2$ and there exist three points $x_1,x_2,x_3 \in C$ satisfying $\omega_C\otimes M^{-1}\cong\cO_C(2x_1+2x_2+2x_3)$. This is a locus in the moduli space of genus $6$ curves with a marked $g^1_4$; we define $\cD \subset \cM_6$ as the closure of the image of the forgetful map restricted to $\cD'$.
\end{defn}

We have the following inclusion:

\begin{lem} \label{prop: trig locus}The trigonal locus is contained in $\cD$.
\end{lem}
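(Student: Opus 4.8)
The plan is to show that a general trigonal curve $T$ of genus $6$ admits a plane sextic model with four nodes and a tritangent line, i.e. that $[T]\in\cD$. Concretely, I want to produce a $g^1_4$ $M$ on $T$ with $h^0(T,M)=2$ together with three points $x_1,x_2,x_3\in T$ such that $\omega_T\otimes M^{-1}\cong\cO_T(2x_1+2x_2+2x_3)$; this exhibits $(T,M)\in\cD'$, hence $[T]\in\cD$.

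First I would fix the trigonal pencil $N=g^1_3$ on $T$ and recall that $\omega_T$ has degree $10$ and $h^0(T,\omega_T)=6$. A natural candidate for $M$ is obtained by starting from $\omega_T\otimes N^{-1}$, which has degree $7$; the point is to find a residuation of the form $\omega_T\otimes M^{-1}\cong\cO_T(2x_1+2x_2+2x_3)$, i.e. to realize $\omega_T$ as $M(2x_1+2x_2+2x_3)$ for a degree-$4$ line bundle $M$ moving in a pencil. The cleanest route is the plane model: a general trigonal genus $6$ curve has a birational plane model of degree $6$ with four nodes (this is classical — project the canonical curve from a suitable $\bP^2$, or equivalently use the fact that $6$-nodal plane sextics have genus $6$ and generically a unique trigonal structure cut by lines through two of the nodes). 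Lines in this $\bP^2$ cut out a $g^2_6$, and a general line is tangent to the sextic; by a dimension count in the two-dimensional linear system of lines, there exist lines tritangent to the sextic (tangency at a point is one condition, so tritangency is three conditions on a two-parameter family — so finitely many, but one must check nonemptiness). Given such a tritangent line $\ell$ meeting $T$ at $2x_1+2x_2+2x_3$, we set $M:=\omega_T\otimes\cO_T(-2x_1-2x_2-2x_3)$, which has degree $4$; it remains to verify $h^0(T,M)=2$, which follows because $M\cong\cO_T(1)\otimes(\text{adjustment})$ is visibly the residual pencil of conics or, more directly, $h^0(M)\geq 2$ by Riemann–Roch ($h^0(M)-h^1(M)=4-6+1=-1$, and $h^1(M)=h^0(\omega_T\otimes M^{-1})=h^0(\cO_T(2x_1+2x_2+2x_3))\geq 1$, with equality $h^0=1$ expected for general choice, giving $h^0(M)=2$).

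Alternatively, and perhaps more robustly for the ``closure'' in the definition of $\cD$, I would argue by degeneration/specialization: it suffices to exhibit one trigonal curve (or a family dominating $\cM_6^{trig}$) lying in $\cD'$ up to taking closure, and since $\cD$ is defined as a closure this is enough. One can take the sextic with four nodes in special position so that a coordinate line is manifestly tangent at three smooth points — e.g. a sextic of the form $Q(x,y,z)^2 = x\cdot F(x,y,z)$ with $Q$ a cubic and $F$ a quintic, along the line $\{x=0\}$, which meets the sextic in $2\cdot(Q=0)\cap\{x=0\}$, three double points — and then check that the generic such curve is smooth of genus $6$, trigonal, and that these deformations sweep out a component of $\cM_6^{trig}$ of the correct dimension $\dim\cM_6^{trig}=2g+1-3+1\cdot(\dots)= 2\cdot 6 - 2 + ?$; more simply, $\cM_6^{trig}$ is irreducible of dimension $9+ (\text{number of moduli of }g^1_3)$, and the family of $4$-nodal sextics with a distinguished tritangent line has the matching dimension, so the image in $\cM_6$ is dense in $\cM_6^{trig}$.

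The main obstacle I anticipate is the nonemptiness and genericity of the tritangent line: one must be sure that a general $4$-nodal plane sextic of genus $6$ actually carries such a line, not merely that the expected count is finite. I would handle this either by the explicit special family above (where existence is manifest and smoothness/genus can be checked by hand), or by a standard argument with the dual curve / the discriminant: the locus of lines tangent to the sextic is a plane curve of known degree in the dual $\bP^2$, and its singular locus (bitangents, etc.) together with flex considerations forces the existence of tritangents; invoking the irreducibility of $\cM_6^{trig}$ then propagates the property to the general trigonal curve. Since $\cD$ is a closure, even the special-family construction suffices, so I would lead with that and remark that the generic trigonal curve inherits the property by irreducibility.
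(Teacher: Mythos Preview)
You have missed the direct argument and taken a detour that, as written, does not close. The paper simply sets $M:=\omega_T\otimes N^{-2}$, where $N$ is the $g^1_3$ on a general trigonal $T$. Then $\omega_T\otimes M^{-1}=N^{\otimes 2}$, and for \emph{any} trigonal fibre $x_1+x_2+x_3\in|N|$ one has $2x_1+2x_2+2x_3\in|N^{\otimes 2}|=|\omega_T\otimes M^{-1}|$; so the required ``tritangent'' divisor is just twice a trigonal fibre. The only thing to check is $h^0(T,M)=2$, and this follows from Riemann--Roch ($h^0(\omega_T\otimes N^{-2})=h^0(N^{\otimes2})-1$) together with $3\le h^0(N^{\otimes2})\le 3$, the lower bound from $\Sym^2H^0(N)\hookrightarrow H^0(N^{\otimes2})$ and the upper bound from Clifford since $T$ is not hyperelliptic.

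Your approach via a birational $4$-nodal plane sextic model has a real gap: for a trigonal curve the obvious $g^2_6$ is $|2N|$, and this map is \emph{not} birational onto a sextic --- it factors as the $3:1$ trigonal map followed by the Veronese embedding of $\bP^1$ as a conic. So the picture of a $4$-nodal sextic with a tritangent line simply does not apply here; that picture is exactly what breaks down on the trigonal locus, which is why $\cD$ is defined as a closure. (In fact the degenerate geometry is consistent with the paper's proof: tangent lines to the image conic pull back to divisors $2x_1+2x_2+2x_3$.) You never produce an alternative birational $g^2_6$, and your tritangent existence and dimension counts are therefore aimed at a model that need not exist. The degeneration sketch is likewise unnecessary once one observes that $M=\omega_T\otimes N^{-2}$ works on the nose.
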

\begin{proof}
Let $T\in \cM_6$ be a general trigonal curve, and let $N$ denote the $g^1_3$ on $T$.
On the one hand, $h^0(T,\omega_T\otimes N^{-2})=h^0(T,N^{ 2})-1$ by Riemann-Roch.
On the other hand, since $T$ is not hyperelliptic we have $h^0(T, N^2)<4$, whereas the natural inclusion $\Sym^2H^0(T,N)\subset H^0(T,N^2)$ yields $h^0(T,N^ 2)\ge 3$. It follows that $\vert \omega_T\otimes N^{-2}\vert$ is a $g^1_4$ on $T$, which proves the result.
\end{proof}

Observe that, geometrically, a non-trigonal $(C,M)$ lies in $\cD'$ if the image of the curve by the morphism attached to $\omega_C\otimes M^{-1}$ (a plane sextic with four nodes) has a tritangent line.
There are four additional tetragonal series on $C$, by considering the pencils of lines through each of the nodes. 
Recall that generically, a curve of genus $6$ has five $g^1_4$. We fix one of the nodes $p$ and denote by $M_p$ the line bundle attached to the pencil.
Since the canonical series corresponds to the cubics through the four nodes, the linear series $\vert \omega_C \otimes M_p^{-1}\vert $ is cut out by conics through the other three nodes. Then $(C,M_p)\in \cD'$ if, and only if, one of these conics is tangent to $C$ at three points.
The next proposition asserts that this is not possible generically. 

\begin{prop}\label{prop:gen1Tritang}
For a general plane sextic with four nodes and one tritangent line (at three smooth points of the sextic), there are no conics passing through three of the nodes and tangent to the sextic in three smooth points. In other words, the forgetful map $\cD' \to \cD$ has degree $1$.
\end{prop}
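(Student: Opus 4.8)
The plan is to exhibit a single plane sextic $\Gamma$ with four nodes and a tritangent line, for which one can check directly that none of the five tetragonal series other than the one cut by the tritangent line belongs to $\cD'$; by semicontinuity this will suffice. Concretely, by the discussion preceding the statement, it is enough to produce $\Gamma$ such that for each node $p$ of $\Gamma$, no conic through the other three nodes is tritangent to $\Gamma$ at three smooth points. One approach: degenerate to a very symmetric configuration. Take the four nodes to be a coordinate-like quadrilateral (say the four points $[\pm 1:\pm 1:1]$, or the three coordinate points together with $[1:1:1]$) and impose the tritangent line, then count parameters: the linear system of sextics with nodes at four general points has projective dimension $27-4\cdot 3=15$, and requiring a fixed line to be tritangent imposes $3$ conditions (tangency at three of its points), leaving a family of dimension $12$ — large enough to move $\Gamma$ generically while keeping the combinatorial data fixed. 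Within this family I would like to find one $\Gamma$ where the "bad" locus is empty.

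The key computational step is the following elimination problem, carried out for one fixed node $p$. Conics through the other three nodes form a $\bP^2$ (three linear conditions on the $\bP^5$ of conics). Tangency of such a conic $Q$ to $\Gamma$ at a smooth point means the scheme $Q\cap\Gamma$, of length $12$, contains a point with multiplicity $\geq 2$; tritangency means it has the form $2r_1+2r_2+2r_3+(\text{residual length }6)$. Imposing "tangent at three points" cuts out a subvariety of the $\bP^2$ of conics whose expected dimension is $2-3<0$, so generically empty. To make this rigorous I would: (i) write the resultant $\mathrm{Res}_y(\Gamma(x,y),Q(x,y))$ (after a generic choice of coordinates so that no relevant vertical lines occur), a degree-$12$ polynomial in $x$ whose coefficients are polynomials in the two parameters of the conic; (ii) the conic is tangent to $\Gamma$ precisely when this resultant has a repeated root, i.e. its discriminant (in $x$) vanishes; tritangency forces the resultant to be divisible by a perfect square of degree $6$, which is a codimension-$3$ condition on the $2$-dimensional family of conics. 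Then a single explicit numerical choice of $\Gamma$ (nodes, tritangent line, and enough free coefficients) for which a Gröbner-basis / resultant computation returns the empty set finishes the argument for that node; by the symmetry of the chosen configuration the same computation handles all four nodes at once. Since $\cD'\to\cD$ is visibly dominant and generically finite, a single fibre of cardinality $1$ gives $\deg(\cD'\to\cD)=1$.

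The main obstacle is genuinely computational rather than conceptual: one must perform an elimination/resultant computation over $\bC$ — or, better, specialize to $\bQ$ or a finite field to make it tractable — and verify that the resulting ideal describing "some conic through three nodes is tritangent" is the unit ideal for a well-chosen sextic. Two points need care. First, one should ensure that the chosen special $\Gamma$ actually lies in $\cD'$ with the tritangent line as its only tritangent data, and that it is a \emph{smooth} genus $6$ curve (four honest nodes, no further singularities, the tritangent line meeting $\Gamma$ at three distinct smooth points) — otherwise the specialization is not in the relevant stratum. Second, one must check that the tangency conditions are imposed at \emph{smooth} points of the sextic, i.e. exclude the spurious solutions where a conic passes doubly through one of the nodes; this amounts to saturating the ideal by the product of the node-ideals before testing for the empty set. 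Modulo these bookkeeping issues, semicontinuity of the fibre dimension of $\cD'\to\cD$ upgrades the single example to the generic statement.
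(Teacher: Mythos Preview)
Your strategy is sound in outline: exhibiting a single curve $C\in\cD$ whose fibre in $\cD'$ consists of exactly one $g^1_4$ does prove the statement, since ``having $\geq 2$ tritangent $g^1_4$'s'' is a closed condition on $\cD$. But as written this is a plan, not a proof: you never actually produce the sextic $\Gamma$ or carry out the elimination, and the feasibility of that computation is the entire content of the argument. Two further bookkeeping points need attention. First, checking that no conic through three nodes is tritangent only rules out the four node-pencils $M_p$; you must also verify that your chosen $C$ has \emph{exactly} five $g^1_4$'s (equivalently that $W^1_4(C)$ is reduced of length $5$), otherwise extra tetragonal series could contribute to the fibre. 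Second, the resultant $\mathrm{Res}_y(\Gamma,Q)$ already acquires double roots from the three nodes on $Q$ (each node contributes a length-$2$ scheme to $Q\cap\Gamma$), so you cannot read tritangency simply as ``divisible by a square of degree $6$''; you must first strip off the nodal contribution and then impose three further double roots on the residual degree-$6$ factor. None of this is fatal, but without the computation in hand there is no proof.

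The paper avoids the computation entirely by a different device: it degenerates the nodal sextic to a \emph{reducible} one $S=D\cup r_1\cup r_2$, where $D$ is a smooth plane quartic with a bitangent line $\ell$ and $r_1,r_2$ are lines through a point $P\in\ell$. The four nodes are chosen among the points of $D\cap r_i$, and $\ell$ becomes the tritangent line. In this degeneration, a tritangent conic through three nodes must be bitangent to $D$ and pass through a fourth point of $D\cap(r_1\cup r_2)$. The question then becomes whether two natural threefolds in $D^{(4)}$ coincide: the locus $I_\ell$ of quadruples lying on two lines meeting on $\ell$, and the locus $W$ of quadruples lying on a bitangent conic. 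The paper shows $I_\ell\not\subset W$ by proving that the map $E\mapsto\cO_D(2H-E)$ is already dominant on $I_\ell$, via an elegant construction with trigonal curves. This trades your brute-force resultant for a conceptual argument in the geometry of the quartic $D$; it is longer to set up but requires no machine verification.
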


\begin{proof}
Since the existence of a tritangent line and a tritangent conic (passing through three of the four nodes) are closed properties, it is enough to exhibit a sextic with (at least) four nodes and one tritangent line for which such a conic does not exist.

We consider a smooth plane quartic $D\subset \bP^2$ with a fixed bitangent line $l$. Set $P\in l\setminus (l\cap D)$ and choose two different lines $r_1, r_2$, with $P\in r_1 \cap r_2$ and $l\neq r_1, r_2$.
Our sextic is $S=D\cup r_1 \cup r_2$, and the four nodes are two points in $D\cap r_1$ and two points in $D\cap r_2$.
The line $l$ is, by construction, tangent to $S$ at three points different from the nodes.
A conic through three of the nodes will be tritangent to $S$ (at three points different from the nodes) if passes through some other point in $r_i\cap D$ and it is bitangent to $D$.
We want to see that for a general choice of $P$ and $r_1, r_2$, this conic does not exist for $S$. 

To this end, we define two natural sets in the symmetric product $D^{(4)}$ of the quartic. First we consider $I_l^0$ to be the closure of
\[
\set{(x_1+x_2,x_3+x_4)\in D^{(2)}
\times D^{(2)}
 \mid x_1x_2, x_3x_4 \text{ are different lines and }x_1x_2 \cap x_3x_4\in l},
\]
and let $I_l$ be the image of $I_l^0$ under the addition map $D^{(2)}\times D^{(2)}\to D^{(4)}$.
This closed set has dimension $3$, and parametrizes sets of four nodes in a sextic of the form $D\cup r_1 \cup r_2$.

The second set $W$, is defined as the preimage of the surface $2D^{(2)}\subset \Pic^4 D$ by the surjective map (with $1$-dimensional general fiber)
\[
\psi: D^{(4)} \to \Pic^4 D; \quad E\mapsto \cO_D(2H-E),
\]
where $H$ is the divisor obtained by intersecting $D$ with a line.
Notice that $W$ parametrizes sets of $4$ points in $D$ such that there is a bitangent conic through the points. 

To conclude the proof, we only need to show that $I_l$ and $W$ are different threefolds inside $D^{(4)}$. 
More precisely, it is enough to see that $I_l$ is not contained in $W$. This is an immediate consequence of the following claim. 

\begin{claim} The map $\psi$ restricted to $I_l$ is dominant.
\end{claim}
Indeed, let $E=p_1 + \ldots + p_4 \in D^{(4)}$ a general effective divisor of degree $4$ in the quartic $D$.
That is, $\cO_D(E)$ is a general element in $\Pic^4 D$. 
Let $f_E:D\to \bP^1=\bP H^0(D,2H-E)^*$ the map sending $x\in D$ to the conic through $p_1, p_2, p_3, p_4 $ and $x$, and let $\Gamma_E\subset D^{(2)}$ be the closure of the set of pairs $x+x'$ with $ f_E(x)=f_E(x')$. 

There is a natural involution $\gamma_E $ in $\Gamma_E$ and a natural $\Gamma_E\to \bP^1$ of degree $6$.
We denote by $T_E$ the quotient $\Gamma _E/\gamma_E$.
Note that $T_E$ is trigonal and parametrizes the vertices of diagonal triangles of sets of vertices $V_1, V_2, V_3, V_4$ such that $p_1, p_2, p_3, p_4, V_1, V_2, V_3, V_4$ lie on a conic.
In particular, $T_E$ can be seen as a curve in the same projective plane.
Then, consider a point in $T_E\cap l$.
By construction, there is an element in $I_l$ mapping to $E$.
Hence, $\psi_{\vert I_l}$ is dominant. 
\end{proof}

\begin{rem}
  It will follow from \autoref{mainthm} that $\cD$ is an irreducible divisor of $\cM_6$. 
\end{rem}

In the second part of this section, we consider trigonal curves of genus $6$ as curves of bidegree $(3,4)$ in $\bP^1\times\bP^1$.
Indeed, for a trigonal curve $(T,N)$ of genus $6$, $\omega_T\otimes N^{-2}$ is a $g_4^1$ and we can consider the map $T\to \bP^1\times\bP^1$ provided by $\omega_T\otimes N^{-2}$ and $N$ in each factor.

Let $\cC^{(3,4)}$ be the set of all smooth $(3,4)$-curves in $\bP^1\times\bP^1$, modulo the action of the linear transformations in each projective line.
Observe that $\dim \cC^{(3,4)}=\dim \cM_6^{trig}=13$, and there is a birational map $\cM_6^{trig} \dasharrow \cC^{(3,4)}$.

For later use we need to study those curves which are bitangent to a vertical line. We define $\cC_b^{(3,4)}\subset\cC^{(3,4)}$ as the closure of the set of classes of curves $C \in |\cO_{\bP^1\times\bP^1}(3,4)|$ for which the first projection $C\to \bP^1$ has a fiber of the form $2x+2y$.
The general element of the preimage in $\cM_6^{trig}$ is a curve $T$ whose unique $g^1_3$, $N$, satisfies that $\omega_T\otimes N^{-2}$ has a divisor of this form.

\begin{prop}\label{prop:transversality}
  The set $\cC_b^{(3,4)}$ (and its preimage in $\cM_6^{trig}$) is a generically reduced divisor.
\end{prop}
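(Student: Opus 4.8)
The statement has two parts: that $\cC_b^{(3,4)}$ is a divisor in $\cC^{(3,4)}$, and that it is generically reduced. I would treat these separately, with the dimension count first.

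\emph{Divisor claim.} The plan is to set up the obvious incidence variety. Inside the product $|\cO_{\bP^1\times\bP^1}(3,4)|\times\bP^1$, consider the closure $\mathcal{I}$ of the locus of pairs $(C,x)$ such that the divisor $C\cap(\{x\}\times\bP^1)$ has the form $2p+2q$ with $p\neq q$ (equivalently, the restricted degree-$4$ binary form on the fiber is a square of a quadratic with distinct roots). Fixing $x=0\in\bP^1$, restriction to the fiber $\{0\}\times\bP^1$ gives a surjective \emph{linear} map from $H^0(\bP^1\times\bP^1,\cO(3,4))$ to $H^0(\bP^1,\cO_{\bP^1}(4))$ (surjectivity holds because $H^1(\cO(3,3))$ vanishes), and the preimage of the discriminant-type subvariety $\{$squares of quadratics$\}\subset\bP H^0(\bP^1,\cO(4))$ (which has codimension $1$ inside the $4$-dimensional space of quartic binary forms — the rational map $[\text{quadratic}]\mapsto[\text{its square}]$ has $2$-dimensional image in $\bP^4$, hence the affine cone is $3$-dimensional, codimension $1$) is a subvariety of codimension $1$ in the fiber. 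Hence each fiber of $\mathcal{I}\to\bP^1$ has codimension $1$ in $|\cO(3,4)|$, so $\dim\mathcal{I}=\dim|\cO(3,4)|$. The first projection $\mathcal{I}\to|\cO(3,4)|$ is generically injective (a general bitangent vertical fiber is unique, as this is an open condition one checks on an explicit example — or one can note that a general member of $\cC_b^{(3,4)}$ has finitely many such fibers), so its image is a divisor; quotienting by the finite-dimensional group $\mathrm{PGL}_2\times\mathrm{PGL}_2$ acting with finite stabilizers on the generic point, $\cC_b^{(3,4)}$ is a divisor in $\cC^{(3,4)}$. The same argument identifies its preimage in $\cM_6^{trig}$ via the birational map $\cM_6^{trig}\dasharrow\cC^{(3,4)}$.

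\emph{Generic reducedness.} This is the part I expect to be the main obstacle, and the cleanest route is to exhibit a single smooth $(3,4)$-curve that meets $\cC_b^{(3,4)}$ transversally, i.e. to produce one $(C_0,x_0)$ with exactly one vertical bitangent fiber $2p+2q$ ($p\neq q$) and to check that the tangent space to $\mathcal{I}$ at $(C_0,x_0)$ maps isomorphically onto the tangent space to its image. Concretely, with the fiber coordinate $t$ and the restricted quartic $F(t)=a_0+a_1 t+\cdots+a_4 t^4$ depending linearly on the $20$ coefficients of $C$, the condition ``$F$ is a perfect square'' is cut out by the vanishing of the two generators (after normalization) of the ideal of the discriminant locus of squares — concretely by the vanishing of the degree-$2$ Wronskian-type minors, but at a point where the double roots $p,q$ are distinct and simple the locus is smooth of the expected codimension $1$, so a single equation (the discriminant of the quadratic factor, or equivalently a suitable $3\times3$ Hankel minor of $F$) suffices locally and has nonzero differential. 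Thus transversality reduces to: the differential of this one function, restricted to the linear family of curves through $C_0$, is nonzero — equivalently, there is a first-order deformation of $C_0$ that breaks the double root at $p$ (and at $q$) to first order. Such a deformation visibly exists (e.g. add $\epsilon\cdot\ell$ where $\ell$ is a bidegree-$(0,1)$ form not vanishing at $p$, or more safely a $(3,4)$-form designed to perturb $F$ near $t=p$), so the map is submersive there and $\mathcal{I}$ — hence $\cC_b^{(3,4)}$, hence its preimage in $\cM_6^{trig}$ — is generically reduced. To make the example airtight I would pick $C_0$ explicitly (for instance a curve whose equation is symmetric enough to control the restricted quartic over a chosen fiber, arranging $F_0(t)=(t^2-1)^2$ there and no other fiber of square type, which is an open non-vacuous condition) and record the nonvanishing differential by a one-line computation, postponing the routine algebra.
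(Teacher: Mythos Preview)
Your divisor argument contains an arithmetic slip that propagates. You correctly note that the squaring map $\bP^2\to\bP^4$ has $2$-dimensional image, but then conclude ``codimension $1$''. The perfect-square locus in the space of binary quartics has codimension~$2$, not~$1$: it is cut out (as you yourself half-acknowledge) by \emph{two} independent equations near a point with distinct double roots. Consequently each fiber of $\mathcal{I}\to\bP^1$ has codimension~$2$ in $|\cO(3,4)|$, so $\dim\mathcal{I}=\dim|\cO(3,4)|-1$, and the first projection is a map from an $18$-fold to $\bP^{19}$. The image is still a divisor provided the projection is generically finite, so your conclusion for the divisor claim survives---but the count needs correction. (The paper's \autoref{althypersurface} sets up exactly this incidence, with the correct $\bP^{15}$-bundle structure over $\bP^1\times\bP^2$.)

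The more serious issue is in the reducedness step. Because the fiberwise condition is codimension~$2$, your claim that ``a single equation \dots\ suffices locally'' is false for the condition at fixed $x_0$; in particular, the $3\times3$ Hankel (catalecticant) minor does not cut out the perfect-square locus. More importantly, the transversality you check---that some first-order deformation of $C_0$ breaks the double root at $p$---only shows that the restriction map $|\cO(3,4)|\to\bP^4$ at the \emph{fixed} fiber $x_0$ is submersive (hence $\mathcal{I}$ is smooth there). It says nothing about the \emph{image} hypersurface in $|\cO(3,4)|$: your perturbation $C_0+\epsilon\cdot\ell$ may destroy the perfect square over $x_0$ while a perfect square reappears over a nearby $x_\epsilon$, keeping $C_0+\epsilon\cdot\ell$ inside the hypersurface. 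What must actually be verified is that a one-parameter family $C_\alpha$ through $C_0$, transverse to the hypersurface, meets it with intersection multiplicity~$1$; equivalently, that for small $\alpha\neq0$ no vertical fiber of $C_\alpha$ is a perfect square, and this to first order.

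This is precisely what the paper does: it writes down an explicit family $P_\alpha$ with $P_0\in\cC_b^{(3,4)}$, uses resultants to verify that $P_\alpha\notin\cC_b^{(3,4)}$ for small $\alpha\neq0$ (so the family is genuinely transverse in $\cC^{(3,4)}$), and then computes the multiplicity by restricting to a section of the $g^1_4$-bundle where one double root persists identically, reducing the two perfect-square equations to the single residual equation $d=0$, whose linear term in $\alpha$ is nonzero. Your outline is in the right spirit but misses both the correct codimension and the need to control \emph{all} vertical fibers simultaneously along the deformation.
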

\begin{proof}
We use coordinates $([x:y],[u:v])$ in $\bP^1\times \bP^1$. We consider the following $1$-dimensional family of $(3,4)$-curves
$\cV=\set{P_\alpha(x,y,u,v)=0\mid \alpha\in \bC}\subset |\cO_{\bP^1\times\bP^1}(3,4)|$, where:
\[
P_\alpha(x,y,u,v)=(x^3+y^3)u^4-2x^3u^3v+(1-\alpha)x^3u^2v^2+2\alpha x^3uv^3+(-\alpha x^3+x^2y+y^3)v^4.
\]
By abuse of notation, we still denote by $P_{\alpha}$ the curve with equation $P_{\alpha}(x,y,u,v) =0$.
Notice that the class of $P_0$ determines a general point inside $\cC_b^{(3,4)}$ since it is smooth and $P_0(1,0,u,v)$ has two double roots.

It is a well known fact from polynomial algebra 
that an equation of the form $Au^4+Bu^3v+Cu^2v^2+Duv^3+Ev^4=0$ is biquadratic if, and only if, the following two equations (in terms of the coefficients) hold:
 \[
\begin{aligned}
 \Delta(A,B,C,D,E)&= 256 A^3E^3 - 192 A^2BDE^2 -128 A^2C^2E^2 + 144 A^2CD^2E -27 A^2D^4 \\
          &\phantom{=} + 144 AB^2CE^2 - 6 AB^2D^2E - 80 ABC^2DE + 18 ABCD^3 + 16 AC^4E \\
          &\phantom{=} - 4 AC^3D^2 - 27 B^4C^2 + 18 B^3CDE - 4 B^3D^3 - 4 B^2C^3E +B^2C^2D^2=0,\\
  d(A,B,C,D,E)& =64A^3E-16A^2C^2+16AB^2C-16A^2BD-3B^4=0.
\end{aligned} 
 \]
 By replacing $A=x^3+y^3$, $B=-2x^3$, $C=(1-\alpha)x^3$, $D=2\alpha x^3$, $E= -\alpha x^3+x^2y+y^3$, we obtain the family of equations $\Delta_{\alpha}(x,y)=0, d_{\alpha}(x,y)=0$, depending on the parameter $\alpha $.
 By using resultants it is not hard to see that there is no $\alpha \neq 0$ and near $0$, such that $\Delta_{\alpha}$ and $d_{\alpha}$ share a root. 
 Therefore, for these $\alpha $, $P_{\alpha}$ has not a vertical bitangent line, in particular $P_0(x,y,u,v)$ and $P_{\alpha}(x,y,u,v)$ are not projectively equivalent.
 Thus, $\cV$ defines an actual curve around the class of $P_0$ in $\cC^{(3,4)}$. 

Consider the $\bP^1$-bundle
\[
\cW=\set{(P_{\alpha},D_{\alpha}) \mid D_{\alpha}\in g_4^1=\vert K_{p_{\alpha}}-2g^1_3\vert }\to \cV.
\]
Around $P_0\in \cV$ we have a section of this $\bP^1$-bundle given by the fiber at the point $[x:y]=[1:0]\in\bP^1$, which is given by the zeroes of the polynomial: 
\[
(u-v)^2(u^2-\alpha v^2).
\]
On this section we always have a double root, hence the intersection with the preimage of $\cC_b^{(3,4)}$ corresponds to the locus where the corresponding homogeneous degree $4$ equation has a second double root. Due to previous discussion we have to impose the equation $d_{\alpha}(x,y)=0$, which in our case is: 
\[
-16\alpha^2 - 32\alpha.
\]
Since the linear term of this polynomial in $\alpha$ is not zero, we conclude that the locus $\cC_b^{(3,4)}$ is reduced.
\end{proof}

\begin{rem} \label{althypersurface}
As suggested by Jieao Song, one could prove alternatively this proposition by checking that the hypersurface in $|\cO_{\bP^1\times\bP^1}(3,4)|$ of curves with a vertical bitangent line is integral of degree 24. Indeed, this hypersurface is dominated by the incidence variety
 \[
 \set{([a_0:a_1],F)\in\bP^1\times |\cO_{\bP^1\times\bP^1}(3,4)|: \text{ $F(a_0,a_1,u,v)$ is a perfect square in $u,v$}},
 \]
 which has a structure of $\bP^{15}$-bundle over $\bP^1\times\bP^2$ (here $\bP^2$ is parametrizing perfect square polynomials of degree 4 in $u,v$); hence both, the incidence and the hypersurface, are irreducible.
 A Chern class computation says that the degree of the hypersurface is $24$; then, by picking a random pencil in $|\cO_{\bP^1\times\bP^1}(3,4)|$ with Macaulay2, one finds 24 distinct intersection points, which proves that the hypersurface is also reduced.
\end{rem}

\section{The tetragonal construction on $\cT_6^o$}\label{sec:Deg1overImage}

Given the structure of the Prym map in genus 6 (see \autoref{prymmapgenus6}), in order to describe $\cP_6^{-1}(\cP_6(\cT_6^o))$ it becomes essential to understand the tetragonal construction applied to a general element in $\cT_6^o$. This was partially done in \cite[Proposition~7.2]{lnr}, where it was proved:

\begin{prop}
  Let $(C_i,\eta_i,M_i)$ ($i=1,2,3$) be a tetragonally related triple of smooth Prym curves $(C_i,\eta_i)\in\cR_6$ with a $g^1_4$ $M_i$ on $C_i$. If $(C_1,\eta_1)\in\cT^o_6$ is general, then $JC_2,JC_3\in\cP_7(\cT^o_7)$.
\end{prop}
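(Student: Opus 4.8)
\emph{Overview and reduction.} The plan is to reduce the assertion to a property of the genus~$7$ trigonal curve underlying the tetragonal triple, and then to transport the semicanonical pencil of $C_1$ along Recillas' construction. By \autoref{trigtetr}, the triple $\{(C_i,\eta_i,M_i)\}_{i=1,2,3}$ arises from data $(T,N,W)$ with $T$ trigonal of genus $7$, $N$ a $g^1_3$ on $T$ and $W=\{0,\mu_1,\mu_2,\mu_3\}\subset JT[2]$ totally isotropic, in such a way that $(T,N,\mu_i)$ produces $(C_i,M_i)$ via Recillas' construction \cite{rec}. In particular the Prym variety of the double cover $\tT_{\mu_j}\to T$ defined by $\mu_j$ is $JC_j$, so $JC_j=\cP_7(T,\mu_j)$ for $j=2,3$. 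It is therefore enough to show that the Prym curves $(T,\mu_2)$ and $(T,\mu_3)$ lie in $\cT^o_7$; since $\mu_2$ and $\mu_3$ play symmetric roles, I would argue only for $(T,\mu_2)$.

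\emph{Reformulation.} By definition $(T,\mu_2)\in\cT^o_7$ precisely when $T$ carries a theta-characteristic $L\in\Pic^6T$ with $h^0(T,L)$ even and positive and $h^0(T,L\otimes\mu_2)$ odd. Writing $\phi_{\mu_2}\colon\tT_{\mu_2}\to T$ for the cover attached to $\mu_2$ and splitting $h^0(\tT_{\mu_2},\phi_{\mu_2}^*L)=h^0(T,L)+h^0(T,L\otimes\mu_2)$ into the eigenspaces of the covering involution, such an $L$ yields a theta-characteristic $\phi_{\mu_2}^*L$ of $\tT_{\mu_2}$ lying in $V^2(\tT_{\mu_2}\to T)$, that is a $2$-torsion point of $JC_2$ on $V^2(\tT_{\mu_2}\to T)$ (which equals the theta-dual of the Abel-Prym curve of $\tT_{\mu_2}$ by \cite{ln}, as $\tT_{\mu_2}$ is non-hyperelliptic for general $(C_1,\eta_1)$); as in Section~\ref{sec:Prel}, this point is then automatically a singular point of $V^2(\tT_{\mu_2}\to T)$. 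Thus the whole problem is to construct such an $L$ on $T$ out of the hypothesis $(C_1,\eta_1)\in\cT^o_6$.

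\emph{Transporting the semicanonical pencil (the key step).} The hypothesis provides a semicanonical pencil $L_1$ on $C_1$ with $h^0(C_1,L_1)=2$ and $h^0(C_1,L_1\otimes\eta_1)=1$, so $L_1$ and $L_1\otimes\eta_1$ are theta-characteristics of $C_1$ differing by $\eta_1$, and $f_1^*L_1$ is a singular $2$-torsion point of $V^2(C_1,\eta_1)$, where $f_1\colon\tC_1\to C_1$ is the cover defined by $\eta_1$. The core of the proof is to show that Recillas' construction $(T,N,\mu_1)\leftrightarrow(C_1,M_1)$, together with Mumford's identification $\eta_1\leftrightarrow\mu_2$ (and $\eta_1\leftrightarrow\mu_3$) of \cite[Section~3]{mu}, carries the pair of theta-characteristics $\{L_1,L_1\otimes\eta_1\}$ on $C_1$ to a theta-characteristic $L$ on $T$ with $h^0(T,L)$ even and $\ge 2$, and $h^0(T,L\otimes\mu_2)$, $h^0(T,L\otimes\mu_3)$ both odd. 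I would set this up by pulling everything back to the $(\bZ/2)^2$-cover $\widehat T\to T$ attached to $W$—which dominates $\tT_{\mu_1},\tT_{\mu_2},\tT_{\mu_3}$ and through which $\tC_1\to C_1$ factors—so that $\phi_{\mu_j}^*L$, $f_1^*L_1$ and their twists by the relevant $2$-torsion classes all become pullbacks of comparable line bundles on $\widehat T$; the Weil-pairing relations among $\mu_1,\mu_2,\mu_3$ then translate into relations among the parities of $h^0(T,L\otimes\mu_i)$ and of $h^0(C_1,L_1),h^0(C_1,L_1\otimes\eta_1)$, and matching the odd member $L_1\otimes\eta_1$ produces the desired $L$. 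Equivalently and more geometrically, $L_1$ forces the plane-sextic models of $C_2,C_3$ given by $\omega_{C_j}\otimes M_j^{-1}$ to acquire a tritangent line (so that $C_2,C_3\in\cD$ with tritangent $g^1_4$'s $M_2,M_3$), and, just as in \autoref{prop: trig locus}, such a tritangent corresponds under Recillas to a semicanonical pencil on $T$. Genericity of $(C_1,\eta_1)$ ensures that all these cohomology groups take their minimal values, that $V^2(\tT_{\mu_2}\to T)$ has the expected codimension $3$, and that the point produced is a genuine singularity; the case of $\mu_3$ is identical.

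\emph{Main obstacle.} The delicate point is exactly this theta-characteristic dictionary and the parity bookkeeping: one must control the four numbers $h^0(T,L\otimes\mu_i)$, $i=0,1,2,3$, simultaneously and check that the semicanonical pencil of $C_1$—which sits on the $\cT^o_6$ side of the construction—is carried to the $\mu_2$- and $\mu_3$-directions on $T$, rather than to a relation that merely reproduces $(C_1,\eta_1)$ itself. The reduction to computations on $\widehat T$ is what makes the comparison of the various twisted theta-characteristics possible, but verifying that the bundle produced on $T$ is a genuine theta-characteristic with even positive $h^0$, and that the associated $2$-torsion point is a singular point of $V^2(\tT_{\mu_2}\to T)$ of the expected codimension, is where the real work lies.
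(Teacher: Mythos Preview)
The paper does not prove this proposition; it is quoted verbatim from \cite[Proposition~7.2]{lnr}, so there is no ``paper's own proof'' to compare against. That said, the proof of the converse (\autoref{prop: how tetr works in D}) in the paper indicates the intended mechanism, and your plan departs from it in one important respect. Your ``equivalently and more geometrically'' route through tritangent lines is circular: in the paper, the fact that $(C_2,M_2),(C_3,M_3)\in\cD'$ is \emph{deduced} from the present proposition together with \autoref{trigonal_constr_and_v2} (this is \autoref{prop: how tetr works in T_odd}), so it cannot be used as input here; nor does \autoref{prop: trig locus} say anything about Recillas and semicanonical pencils.

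Your theta-characteristic transport plan is the right shape, but as you acknowledge it stops short of the key step. The missing input, visible in the proof of \autoref{prop: how tetr works in D}, is twofold. First, one does not build $L$ on $T$ by pulling back through a $(\bZ/2)^2$-cover; rather one uses the global fact $\cT^e_g=\cP_g^{-1}(\theta_{null})$: since $C_1\in\cT_6$ one has $JC_1=P(T,\mu_1)\in\theta_{null}$, hence $(T,\mu_1)\in\cT^e_7$, which directly furnishes a semicanonical pencil $L$ on $T$ with $h^0(L\otimes\mu_1)$ even. Second, the parity bookkeeping is the Riemann--Mumford relation $\sum_i h^0(L\otimes\mu_i)\equiv 0\pmod 2$ on the isotropic subgroup $W$; combined with the even parities of $h^0(L)$ and $h^0(L\otimes\mu_1)$, it forces $h^0(L\otimes\mu_2)$ and $h^0(L\otimes\mu_3)$ to have the \emph{same} parity. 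The remaining work---ruling out the possibility that both are even---is precisely where the hypothesis $(C_1,\eta_1)\in\cT^o_6$ (rather than $\cT^e_6$) must be invoked, via Mumford's identification of $\eta_1$ with the image of $\mu_2$ (equivalently $\mu_3$) in $P(T,\mu_1)[2]$; your proposal gestures at this but does not carry it out, and without it the argument does not close.
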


More precisely the pairs $(C_2,M_2),(C_3,M_3)$ correspond, under Recillas' trigonal construction, to two covers $(T,\mu_2),(T,\mu_3)\in\cT^o_7$ of the same trigonal curve $T$ of genus 7. It is thus important to understand Prym varieties of covers of trigonal curves endowed with an odd semicanonical pencil. This is the content of the next proposition:

\begin{prop}\label{trigonal_constr_and_v2}
  Let $(T,\mu)$ be a trigonal, non-hyperelliptic Prym curve of genus $g\geq6$, and let $C$ be the curve of genus $g-1$ (endowed with a $g^1_4$ $M$) obtained by trigonal construction. If $(T,\mu)\in\cT^o_g$, then
  \[
  \omega_C\otimes M^{-1}\cong \cO_C\left(2(x_1+\ldots+x_{g-4})\right)
  \]
  for some $x_1,\ldots,x_{g-4}\in C$.
\end{prop}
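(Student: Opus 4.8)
The plan is to exploit Donagi's reformulation of the tetragonal construction (\autoref{trigtetr}) together with Recillas' trigonal construction, and to translate the hypothesis $(T,\mu)\in\cT^o_g$ into a geometric statement on the associated tetragonal curve $C$. First I would set $N$ for the $g^1_3$ on $T$, so that the pair $(C,M)$ is obtained from $(T,N,\mu)$ via Recillas' construction; recall that then $\tT$ (the étale double cover of $T$ determined by $\mu$) is realized inside $\Pic^2 C$, or rather inside $C^{(2)}$, as the locus of effective divisors $p+q$ with $h^0(C,M(-p-q))>0$, and $JC$ is the Prym variety of $\tT\to T$. The hypothesis that $(T,\mu)\in\cT^o_g$ means there is a theta-characteristic $L$ on $T$ with $h^0(T,L)$ even positive and $h^0(T,L\otimes\mu)$ odd; as recalled in Section~2, this produces the point $f^*L$ in $V^2(T,\mu)=T(\tT)$ (the theta-dual of the Abel--Prym curve) and moreover a \emph{singular}, $2$-torsion point of $V^2(T,\mu)$.

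The key step is to identify, under the isomorphism $P(T,\mu)\cong JC$, the singular $2$-torsion point of $V^2$ coming from the odd semicanonical pencil with an \emph{explicit} theta-characteristic (or square root) on $C$, and then to read off what being a $2$-torsion point of $V^2$ forces on that line bundle. Concretely, I expect that $f^*L$, transported to $JC$, is (a translate of) a line bundle of the form $\omega_C\otimes M^{-1}$ twisted by an effective divisor of the shape $2(x_1+\ldots+x_{g-4})$; equivalently, that the corresponding $2$-torsion/theta-characteristic condition says exactly $\omega_C\otimes M^{-1}\cong \cO_C(2(x_1+\ldots+x_{g-4}))$. A clean way to see this is via the canonical model: since $(C,M)$ is the tetragonal curve and $\omega_C\otimes M^{-1}$ has degree $2g-6=2(g-3)$ with $h^0=h^0(C,M)-1+ (g-3) = g-3$ by Riemann--Roch (so it is the unique other special series besides $M$ when $C$ is general), the statement is that this residual series has a totally tangential divisor. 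I would match $h^0(C,\omega_C\otimes M^{-1})$-computations and the inclusion $\Sym^2 H^0(C,?)\subset H^0(C,?)$ on the $C$-side with the $h^0(T,L)$, $h^0(T,L\otimes\mu)$ parities on the $T$-side, using that under Recillas the sections of $M$ on $C$ correspond to fibers of $N$ on $T$ and the $2$-torsion $\mu$ controls the splitting; the semicanonical pencil $L$ then corresponds to a line bundle on $C$ that becomes a square after subtracting $\omega_C\otimes M^{-1}$ from a canonical class, which is precisely the asserted divisorial identity. (The case $g=6$, where $g-4=2$, is the one actually used in \autoref{mainthm}, and matches the definition of $\cD'$.)

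The main obstacle will be the bookkeeping in this dictionary: making the identification of $V^2(T,\mu)$ with a Brill--Noether locus on $JC$ precise enough to pin down \emph{which} line bundle on $C$ the point $f^*L$ corresponds to, rather than just that some such point exists. One has to be careful with the various presentations ($\Pic^{2g-2}\tT$ versus $P^-$ versus $JC$), with the choice of theta-characteristic on $\tT$ used to translate $P^-$ to $JC$, and with the fact that $V^2$ is only defined up to the translations/symmetry recalled in Section~2. I would handle this either by a direct section-counting argument on $C$ (showing that the existence of a semicanonical pencil $L$ on $T$ with the given parities forces $\omega_C\otimes M^{-1}$ to have an effective square root-type divisor, using $h^0$ estimates and the trigonal-construction correspondence of linear series), or by invoking the explicit description in \cite{welters} and \cite{ln} of $V^2(T,\mu)=T(\tT)$ as the theta-dual of the Abel--Prym curve together with the known translation of $T(\tT)$ under Recillas into a Brill--Noether-type subvariety of $JC$, and then localizing at the singular $2$-torsion point. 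Either way, once the correspondence is set up the divisorial identity $\omega_C\otimes M^{-1}\cong\cO_C(2(x_1+\ldots+x_{g-4}))$ should drop out; the non-hyperelliptic and $g\geq6$ hypotheses are used precisely to guarantee $V^2(T,\mu)=T(\tT)$ and that the residual series $\omega_C\otimes M^{-1}$ behaves as expected.
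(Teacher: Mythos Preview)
Your overall strategy is right: the hypothesis $(T,\mu)\in\cT^o_g$ gives a singular $2$-torsion point on $V^2(T,\mu)$, and one must transport this to $JC$ and read off the divisorial consequence. But the proposal stops short of the one concrete input that makes this work, and the ``bookkeeping'' you flag as the obstacle is in fact the whole proof.

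The missing ingredient is the explicit shape of $V^2(T,\mu)$ on the $C$-side: by H\"oring \cite{ho}, under Recillas one has
\[
V^2(T,\mu)\;\cong\;W_{g-4}(C)\,\cup\,\bigl(-W_{g-4}(C)\bigr),
\]
two irreducible components, each a translate of $W_{g-4}(C)\subset\Pic^{g-4}C$. The argument then runs: (i) for $C$ non-hyperelliptic, $W_{g-4}(C)$ alone admits no symmetric translate (a Clifford-index computation); (ii) hence the only symmetric model of $V^2$ in $\Pic^{g-2}C$ is the one in which the Serre involution $L\mapsto\omega_C\otimes L^{-1}$ \emph{exchanges} the two components; (iii) a theta-characteristic of $C$ lying in this model is fixed by that involution, so lies in \emph{both} components and is therefore automatically singular; (iv) writing out ``theta-characteristic lies in the explicit translate of $W_{g-4}(C)$'' and unwinding the translation by $\alpha$ with $\alpha^{\otimes2}=\cO_C(r_0+s_0-p_0-q_0)$ gives exactly $\omega_C\otimes M^{-1}\cong\cO_C(2(x_1+\cdots+x_{g-4}))$. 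Without the two-component decomposition there is no mechanism linking ``singular $2$-torsion point exists'' to the divisor identity; your ``direct section-counting'' route has no visible path to the conclusion, and your second option (via \cite{welters}, \cite{ln}) is the correct one but needs the H\"oring input, not merely the theta-dual description.

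Two small corrections. First, $C$ has genus $g-1$, so $\deg(\omega_C\otimes M^{-1})=2(g-1)-2-4=2(g-4)$, not $2(g-3)$, and $h^0(C,\omega_C\otimes M^{-1})=g-4$; this is consistent with the $g-4$ points $x_i$ in the statement. Second, the non-hyperellipticity actually used in step~(i) is that of $C$, not of $T$; it is what forces $W_{g-4}(C)$ to have no symmetric translate on its own.
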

\begin{proof}
  We will exploit the fact that (a suitable translate of) the Prym-Brill-Noether locus $V^2(T,\mu)$ is symmetric with a singular 2-torsion point, since $(T,\mu)\in\cT^o_g$. We have
  \[
  V^2(T,\mu)\cong W_{g-4}(C)\cup \left(-W_{g-4}(C)\right)
  \]
  (see \cite[Section~4.C]{ho}). More precisely, recall that the double cover $\tT\to T$ can be canonically embedded in $\Pic^2 C$ as 
\[
\tT=\set{p+q\in C^{(2)}\mid h^0(L(-p-q))>0}\subset C^{(2)}\to \Pic^2 C.
\]
Thus, for a fixed $p_0+q_0\in\tT$, we can consider the (non-canonical) Abel-Prym curve
\[
\tT\hookrightarrow \Pic^0 C,\qquad p+q\mapsto\cO_C(p+q-p_0-q_0).
\]
Using this embedding, we have a model of $V^2(T,\mu)$ in $\Pic^{g-2}C$ as the theta-dual of $\tT$:
\[
V^2(T,\mu)=\set{\xi\in\Pic^{g-2}C\mid \tT+\xi\subset \Theta_C}
\]
(where $\Theta_C\subset\Pic^{g-2}C$ denotes the Riemann theta divisor), which consists of two components:

\begin{itemize}
 \item $W_{g-4}(C)+p_0+q_0$.
 \item $K_C-r_0-s_0-W_{g-4}(C)$, where $r_0,s_0\in C$ satisfy $M=\cO_C(p_0+q_0+r_0+s_0)$.
\end{itemize}

Note that if $C$ is non-hyperelliptic, then $W_{g-4}(C)$ does not admit a symmetric translate. 
Indeed, suppose that $W_{g-4}(C)-\alpha=-W_{g-4}(C)+\alpha$, that is, there exists $\alpha$ such that for all $x_1+\ldots+x_{g-4}$ there exists $y_1+\ldots+y_{g-4}$ such that $2\alpha=x_1+\ldots+x_{g-4}+y_1+\ldots+y_{g-4}$.
Then $h^0(C,2\alpha)\geq g-4+1$, and since $\deg \alpha =2g-8$, this implies that the Clifford index of $C$ is 0, hence hyperelliptic.

Therefore, $W_{g-4}(C)$ does not admit symmetric translates and $V^2(T,\mu)\subset \Pic^{g-2}C$ becomes symmetric exactly when its two components are exchanged by the involution $L\mapsto \omega_C\otimes L^{-1}$ on $\Pic^{g-2}C$ (recall that in the torsor $\Pic^{g-2}C$ of $JC$, the inversion map $-1$ corresponds to taking the adjoint line bundle).
This is equivalent to translate by $\alpha\in\Pic^0 C$ such that
\[
W_{g-4}(C)+p_0+q_0+\alpha=W_{g-4}(C)+r_0+s_0-\alpha,
\]
namely $\alpha^{\otimes 2}=\cO_C(r_0+s_0-p_0-q_0)$.

Note that any theta-characteristic on $C$ (which now plays the role of a 2-torsion point in $\Pic^{g-2}C$) lying in this symmetric model of $V^2(T,\mu)$ is automatically a singular point of $V^2(T,\mu)$ (since it lies in both components).
The condition of $W_{g-4}(C)+p_0+q_0+\alpha$ containing a theta-characteristic is easily seen to be equivalent to the existence of $x_1,\ldots,x_{g-4}\in C$ such that $2(x_1+\ldots+x_{g-4})\in|\omega_C\otimes M^{-1}|$, which finishes the proof.
\end{proof}

By combining the two propositions above, we obtain the following relation between $\cT^o_6$ and the locus $\cD$ introduced in \autoref{sec:divisorD}:

\begin{cor}\label{prop: how tetr works in T_odd}
Let $(C_i,\eta_i,M_i)$ ($i=1,2,3$) be a tetragonally related triple of smooth Prym curves $(C_i,\eta_i)\in\cR_6$ with a $g^1_4$ $M_i$ on $C_i$. If $(C_1,\eta_1)\in\cT^o_6$ is general, then the pairs $(C_2,M_2)$ and $(C_3,M_3)$ belong to $\cD'$.
\end{cor}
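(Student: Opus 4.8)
The plan is to derive \autoref{prop: how tetr works in T_odd} by concatenating the two results just proved. I would first invoke \cite[Proposition~7.2]{lnr} and the discussion following it: for a general $(C_1,\eta_1)\in\cT^o_6$, the pairs $(C_2,M_2)$ and $(C_3,M_3)$ are obtained, via Recillas' trigonal construction, from two covers $(T,\mu_2),(T,\mu_3)\in\cT^o_7$ of one and the same trigonal curve $T$ of genus $7$. Being trigonal, $T$ is non-hyperelliptic, so \autoref{trigonal_constr_and_v2} applies to each $(T,\mu_i)$ with $g=7$; since $g-4=3$, it yields
\[
\omega_{C_i}\otimes M_i^{-1}\cong\cO_{C_i}(2x_1+2x_2+2x_3)
\]
for suitable $x_1,x_2,x_3\in C_i$ ($i=2,3$), which is precisely the divisorial condition in the definition of $\cD'$.

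To finish I would check that $C_i$ is non-hyperelliptic and that $h^0(C_i,M_i)=2$; the first point is moreover what makes \autoref{trigonal_constr_and_v2} applicable, as its proof uses non-hyperellipticity of the tetragonal curve. For this I would argue by dimensions. The locus $\cR^{\mathrm{hyp}}_6\subset\cR_6$ of pairs with hyperelliptic underlying curve has dimension $2\cdot 6-1=11$, hence its image under $\cP_6$ has dimension at most $11$, strictly less than $\dim\cZ=14$; since $\cZ$ is irreducible, its general point $A$ lies outside $\cP_6(\cR^{\mathrm{hyp}}_6)$, and, $\cP_6|_{\cT^o_6}$ being generically finite onto $\cZ$, a general $(C_1,\eta_1)\in\cT^o_6$ maps to such an $A$. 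Consequently no curve appearing in the fibre $\cP_6^{-1}(A)$---in particular neither $C_2$ nor $C_3$---is hyperelliptic. Finally, a $g^1_4$ on a non-hyperelliptic genus-$6$ curve is necessarily complete with $h^0=2$ (otherwise the curve would carry a $g^2_4$ and thus have Clifford index $0$), so $h^0(C_i,M_i)=2$ and therefore $(C_i,M_i)\in\cD'$ for $i=2,3$.

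I expect the whole argument to be a formal consequence of \cite[Proposition~7.2]{lnr} and \autoref{trigonal_constr_and_v2}, the only slightly delicate ingredient being the exclusion of hyperelliptic $C_i$---which is needed both to land in $\cD'$ and to legitimise \autoref{trigonal_constr_and_v2}. The dimension count above should settle it; alternatively one could appeal to Mumford's description of Prym varieties of étale double covers of hyperelliptic curves (\cite{mu}) as products of smaller hyperelliptic Jacobians, and reach the same conclusion from the indecomposability of the general element of $\cZ$.
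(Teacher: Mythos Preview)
Your proof is correct and follows the same approach as the paper, which simply states that the corollary is obtained ``by combining the two propositions above.'' You carry out exactly that combination and, in addition, verify the auxiliary hypotheses (non-hyperellipticity of $C_2,C_3$ and $h^0(C_i,M_i)=2$) that the paper leaves implicit for general elements; your dimension count is a clean way to do this, and your remark that the proof of \autoref{trigonal_constr_and_v2} actually uses non-hyperellipticity of the tetragonal curve $C$ (rather than of $T$) is a fair observation.
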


An immediate consequence of this discussion is:

\begin{cor}\label{cor:Ddivisor}
The locus $\cD\subset \cM_6$ is a divisor.
\end{cor}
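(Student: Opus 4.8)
The plan is to deduce that $\cD$ is a divisor by tracing dimensions through the tetragonal construction, using the two preceding propositions. First I would recall the setup: by hypothesis $\cT^o_6\subset\cR_6$ is an irreducible divisor, so $\dim\cT^o_6=\dim\cR_6-1=\dim\cM_6-1=14$. Take $(C_1,\eta_1)\in\cT^o_6$ general. A general genus $6$ curve has exactly five $g^1_4$'s, so choosing a $g^1_4$ $M_1$ on $C_1$ is a finite choice; applying \autoref{trigtetr} (the tetragonal construction) produces a tetragonally related triple $\{(C_i,\eta_i,M_i)\}_{i=1,2,3}$, and by \autoref{prop: how tetr works in T_odd} the pairs $(C_2,M_2)$ and $(C_3,M_3)$ lie in $\cD'$. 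Since the three Prym varieties agree and $\cP_6|_{\cT^o_6}$ is generically finite onto its image (by \cite{lnr}, recalled in the excerpt), the assignment $(C_1,\eta_1,M_1)\mapsto(C_2,M_2)$ has finite fibers: given $(C_2,M_2,\eta_2)$ one recovers finitely many Prym data with that fixed Prym variety, and among those the ones in $\cT^o_6$ form a finite set. Hence the image of $\cT^o_6$ (with its finitely many $g^1_4$ choices) under this construction is a subvariety of $\cD'$ of dimension at least $14$.

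Next I would compare this with the dimension of $\cM_6$. We have just produced a $14$-dimensional family of pairs $(C_2,M_2)\in\cD'$, hence a $14$-dimensional family of curves $[C_2]\in\cD$ (the forgetful map $\cD'\to\cD$ has finite, indeed generically degree-one, fibers by \autoref{prop:gen1Tritang}, so it does not drop dimension). Therefore $\dim\cD\geq 14=\dim\cM_6-1$. On the other hand $\cD$ is a proper closed subset of $\cM_6$: for instance a general genus $6$ curve is not trigonal and, by \autoref{prop:gen1Tritang}'s construction or a direct parameter count, does not admit a plane sextic model with a tritangent line — equivalently, the defining condition $\omega_C\otimes M^{-1}\cong\cO_C(2x_1+2x_2+2x_3)$ imposes a nontrivial constraint. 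Thus $\dim\cD\leq\dim\cM_6-1$, and combining the two inequalities gives $\dim\cD=\dim\cM_6-1$, i.e. $\cD$ is a divisor.

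The main obstacle I anticipate is the lower bound: one must be careful that the $14$-dimensional family of triples in $\cT^o_6$ (times the finite set of $g^1_4$'s) does not collapse to something lower-dimensional in $\cD'$. This is where generic finiteness of $\cP_6|_{\cT^o_6}$ and of $\cP_6$ itself (\autoref{prymmapgenus6}) is essential: the map $(C_1,\eta_1,M_1)\mapsto(C_2,M_2)$ can be inverted up to finite ambiguity by reapplying the tetragonal construction, which recovers $(C_1,\eta_1)$ among finitely many tetragonal partners of $(C_2,\eta_2,M_2)$ once a $g^1_4$ on $C_2$ is fixed. A cleaner way to package this: the composition $\cT^o_6\times\{\text{five }g^1_4\}\dashrightarrow\cD'\xrightarrow{\text{forget}}\cD\hookrightarrow\cM_6$ followed by $\cP_6$-type data is generically finite onto a $14$-dimensional image inside $\cA_5$ (namely inside $\cZ=\overline{\cP_6(\cT^o_6)}$, which has dimension $14$), forcing each intermediate space to have dimension $\geq 14$. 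Once this is in place the upper bound is routine, so the corollary follows.

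Alternatively, and more directly, one simply observes that $\cT^o_6$ surjects (via the tetragonal construction and Recillas) onto a $14$-dimensional family inside $\cD'$, while $\cD\subsetneq\cM_6$; hence $13\leq \dim\cD\leq 13$ once one notes $\dim\cD'=\dim\cD$ by \autoref{prop:gen1Tritang} — wait, the count is $\dim\cM_6=15$, so $\dim\cD=14$ would make $\cD$ a divisor; I would state it with the correct numerology $\dim\cM_6=15$ and conclude $\dim\cD=14$.
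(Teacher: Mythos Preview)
Your argument is correct and is exactly the dimension count the paper leaves implicit when it calls the corollary an ``immediate consequence'' of \autoref{prop: how tetr works in T_odd}: the tetragonal construction carries the $14$-dimensional $\cT^o_6$ into $\pi^{-1}\cD$ with finite fibers (the Pryms agree and $\cP_6$ is generically finite), so $\dim\cD\geq 14$, while $\cD\subsetneq\cM_6$ gives the reverse inequality. The numerical wobble near the end you already caught yourself ($\dim\cM_6=15$, hence $\dim\cD=14$); note also that \autoref{prop:gen1Tritang} is not what gives $\cD\subsetneq\cM_6$---that is the routine parameter count you allude to.
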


Conversely, a general element in $\pi^{-1}\cD$ (with no condition on the 2-torsion line bundle!) is tetragonally related to a Prym curve in $\cT^o_6$:

\begin{prop}\label{prop: how tetr works in D}
Let $(C_i,\eta_i,M_i)$ ($i=1,2,3$) be a tetragonally related triple of smooth Prym curves $(C_i,\eta_i)\in\cR_6$ with a $g^1_4$ $M_i$ on $C_i$.
If $(C_2,M_2)\in\cD'$ is general, then we have $(C_1,\eta_1)\in\cT^o_6$ and $(C_3,M_3)\in\cD'$.
\end{prop}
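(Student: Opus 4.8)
The statement is essentially a converse to \autoref{prop: how tetr works in T_odd}, so the natural strategy is to run the bijection of \autoref{trigtetr} backwards, starting from the data $(C_2,M_2)\in\cD'$. By Recillas' construction, the pair $(C_2,M_2)$ together with $\eta_2$ comes from a triple $(T,N,W)$ with $T$ trigonal of genus $7$, $N$ a $g^1_3$, and $W=\{0,\mu_1,\mu_2,\mu_3\}\subset JT[2]$ totally isotropic, where $(T,N,\mu_2)\leftrightarrow (C_2,M_2)$ via Recillas and $\mu_1,\mu_3$ encode $\eta_2$. The first key step is to show that, because $(C_2,M_2)\in\cD'$, the associated cover $(T,\mu_2)$ lies in $\cT^o_7$: concretely, I would read \autoref{trigonal_constr_and_v2} in reverse. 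That proposition shows that if $(T,\mu)\in\cT^o_7$ then $\omega_{C}\otimes M^{-1}\cong\cO_C(2(x_1+x_2+x_3))$; the proof actually establishes an equivalence, since the existence of such $x_i$ is equivalent to the symmetric model of $V^2(T,\mu)$ containing a theta-characteristic, which forces the singular $2$-torsion point and hence $(T,\mu)\in\cT^o_7$. So for general $(C_2,M_2)\in\cD'$ we get $(T,\mu_2)\in\cT^o_7$.

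The second step is to transport this odd-semicanonical condition across the totally isotropic subgroup $W$. Recall that $(T,\mu_i)\in\cT^o_7$ means $T$ carries a semicanonical pencil $L$ with $h^0(T,L\otimes\mu_i)$ odd; equivalently, by Mumford's description, $f^*L\in V^2(T,\mu_i)$ is a singular $2$-torsion point, i.e. $L$ is a theta-characteristic of $T$ with $L\otimes\mu_i$ having odd $h^0$ and $h^0(T,L)\ge 2$ even. The plan is: for the fixed semicanonical pencil $L$ on $T$, the three values $h^0(T,L\otimes\mu_1),h^0(T,L\otimes\mu_2),h^0(T,L\otimes\mu_3)$ are constrained by the Weil-pairing isotropy of $W$ together with the parity of $h^0(T,L)$ — this is the same mechanism that, in the genus-$6$ fiber, gives the "line meets $16$ of the other $26$ lines" incidence. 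Since $(C_2,M_2)$ and $(C_3,M_3)$ play symmetric roles (both attached to $\mu$'s inside $\langle\mu_1\rangle^\perp$), the most efficient route is: show that exactly one of the three covers $(T,\mu_i)$ has $h^0(T,L\otimes\mu_i)$ *even* (this will be the index $1$), and the other two are odd. Then $(C_1,\eta_1)$, being Recillas-dual to the even one, together with $\eta_1\leftrightarrow\mu_2$ or $\mu_3$ — wait, one must be careful with indices: $(C_1,M_1)$ corresponds to $(T,N,\mu_1)$, and $\eta_1$ to one of $\mu_2,\mu_3$. The point is that having $(T,\mu_1)\in\cT^e_7$ (even) is exactly the statement, via \cite{lnr} and \autoref{trigonal_constr_and_v2}'s argument applied with the even parity, that $(C_1,M_1)$ carries a tetragonal series whose residual is a semicanonical pencil twisted appropriately, which one then re-interprets on $C_1$ as $(C_1,\eta_1)\in\cT^o_6$; and having $(T,\mu_3)\in\cT^o_7$ gives $(C_3,M_3)\in\cD'$ by \autoref{prop: how tetr works in T_odd}'s proposition applied to genus $6$ (i.e. \autoref{trigonal_constr_and_v2} with $g=6$).

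A cleaner way to organize the last two steps, which I would actually write: parametrize the fiber of $\cP_6$ through $(C_1,\eta_1)$, which by \autoref{prymmapgenus6} is the $27$ lines on a cubic surface, with tetragonal relation $=$ incidence; our hypothesis places two of the three lines $\{\ell_1,\ell_2,\ell_3\}$ (the tetragonal triple through a chosen $g^1_4$) at members of $\pi^{-1}\cD$, and we want the third at $\cT^o_6$. But this is circular at this point of the paper (\autoref{mainthm} is not yet proved), so I will avoid it and stick with the trigonal-curve bookkeeping. The \textbf{main obstacle} is precisely the parity count on $(h^0(T,L\otimes\mu_1),h^0(T,L\otimes\mu_2),h^0(T,L\otimes\mu_3))$: one must verify that the isotropic triple $W$ cannot have all three twists of the semicanonical pencil of the same parity, and must pin down which configuration occurs for the *general* member of $\cD'$ — an open condition argument, reducing to exhibiting one explicit trigonal $T$ of genus $7$ with a semicanonical pencil $L$ and an isotropic $W$ realizing the pattern (two odd, one even), e.g. by degenerating to a trigonal $C_2$ (so $T$ and the $C_i$ are all trigonal-related), where the Prym-Brill-Noether geometry can be computed directly. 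Once the parity pattern is fixed generically, both conclusions $(C_1,\eta_1)\in\cT^o_6$ and $(C_3,M_3)\in\cD'$ follow from \autoref{trigonal_constr_and_v2} (in genus $6$) exactly as \autoref{prop: how tetr works in T_odd} was derived, just read in the direction trigonal $\to$ tetragonal.
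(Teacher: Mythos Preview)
Your overall strategy matches the paper's proof: pass to the trigonal curve $(T,N,W)$ of genus $7$, show $(T,\mu_2)\in\cT^o_7$, determine the parities of $L_T\otimes\mu_1$ and $L_T\otimes\mu_3$, and deduce the conclusions. However, there are two places where your plan is either incomplete or unnecessarily complicated.

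First, the step you flag as the \emph{main obstacle} --- the parity pattern of $(h^0(L_T\otimes\mu_i))_{i=1,2,3}$ --- is not an obstacle at all, and no degeneration or explicit example is needed. Since $\mu_3=\mu_1\otimes\mu_2$, the Riemann--Mumford relation gives
\[
h^0(T,L_T)+h^0(T,L_T\otimes\mu_1)+h^0(T,L_T\otimes\mu_2)+h^0(T,L_T\otimes\mu_3)\equiv 0\pmod 2.
\]
With $h^0(L_T)$ even and $h^0(L_T\otimes\mu_2)$ odd, this forces $L_T\otimes\mu_1$ and $L_T\otimes\mu_3$ to have \emph{opposite} parities; up to relabeling, $(T,\mu_1)\in\cT^e_7$ and $(T,\mu_3)\in\cT^o_7$. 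This is the paper's argument, and it is a one-line computation rather than a specialization.

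Second, your route from $(T,\mu_1)\in\cT^e_7$ to $(C_1,\eta_1)\in\cT^o_6$ is muddled; \autoref{trigonal_constr_and_v2} says nothing in the even case. The paper argues differently: $(T,\mu_1)\in\cT^e_7$ gives $JC_1=P(T,\mu_1)\in\theta_{null}$, hence $C_1\in\cT_6$; then one must still decide whether $(C_1,\eta_1)$ lies in $\cT^e_6$ or $\cT^o_6$. This is settled by the fact (from \cite[Proposition~7.2]{lnr}) that $\cT^e_6$ is closed under the tetragonal construction, so if $(C_1,\eta_1)\in\cT^e_6$ then also $(C_2,\eta_2)\in\cT^e_6$, contradicting generality. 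You did not supply this step.

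A minor point on your first step: the ``converse'' of \autoref{trigonal_constr_and_v2} is not quite an equivalence. From $(C_2,M_2)\in\cD'$ one gets a theta-characteristic $L_T$ on $T$ with $f^*L_T\in P^-(T,\mu_2)$ and $h^0(f^*L_T)=h^0(L_T)+h^0(L_T\otimes\mu_2)\geq 3$ odd; but this allows the alternative that $L_T$ is \emph{odd} with $h^0(L_T)\geq3$, rather than an even semicanonical pencil. The paper rules this out by a dimension count (such $T$ have Maroni invariant $3$, a locus of dimension $13$). You invoked genericity but did not name the actual obstruction to exclude.
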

\begin{proof}
Let $T\in \cM_7$ denote the trigonal curve and $\{0,\mu_1,\mu_2,\mu_3\}\subset JT[2]$ the totally isotropic subgroup associated to the tetragonally related triple by \autoref{trigtetr}, so that $P(T,\mu_i)\cong JC_i$ under Recillas' construction.

By arguing as in the last two paragraphs of the proof of \autoref{trigonal_constr_and_v2}, one sees that the assumption $(C_2,M_2)\in\cD'$ is equivalent to $V^2(T,\mu_2)$ having a symmetric translate with a singular 2-torsion point. We claim that $(T,\mu_2)\in\cT^o_7$.

Indeed, let $f:\tT_2\to T$ be the étale double cover corresponding to $(T,\mu_2)$. When $V^2(T,\mu_2)$ is translated from $P^-(T,\mu_2)$ to the actual Prym $P(T,\mu_2)$, the symmetric models for $V^2(T,\mu_2)$ are obtained with translation by a theta-characteristic of $\tT$ lying in $P^-(T,\mu_2)$.
Such a theta-characteristic is of the form $f^*L_T$, for some theta-characteristic $L_T$ on $T$.
Since $f^*L_T\in P^-(T,\mu_2)$, we have $h^0(L_T)+h^0(L_T\otimes\mu_2)=h^0(f^*L_T)\geq 3$ is odd. Hence (after possibly replacing $L_T$ by $L_T\otimes\mu_2$), we have one of the following:

\begin{itemize}
    \item Either $L_T$ is a semicanonical pencil such that $L_T\otimes\mu_2$ is odd, which implies $(T,\mu_2)\in\cT^o_7$.

    \item Or $L_T$ is an odd theta-characteristic with $h^0(L_T)\geq3$. This implies that $T$ has Maroni invariant 3 (see \cite[Proposition~3.2]{tctrigonal}), which is impossible if $(C_2,M_2)$ is a general element of $\cD'$, since the locus of trigonal genus 7 curves with Maroni invariant 3 has dimension 13.
\end{itemize}

Now by the Riemann-Mumford relation (see \cite{mu3} or \cite[Theorem~1.13]{harris}), we have
\[
h^0(T,L_T)+h^0(T,L_T\otimes\mu_1)+h^0(T,L_T\otimes\mu_2)+h^0(T,L_T\otimes\mu_3)\equiv 0\pmod{2}
\]
(here we use that $\mu_3=\mu_1\otimes\mu_2$), which implies that $L_T\otimes\mu_1$ and $L_T\otimes \mu_3$ are theta-characteristics of distinct parities.

Say $(T,\mu_1)\in\cT^e_7$ and $(T,\mu_3)\in\cT^o_7$. Then on the one hand, the pair $(C_3,M_3)$ lies in $\cD'$ by \autoref{trigonal_constr_and_v2} and, on the other hand, $JC_1=P(T,\mu_1)\in\theta_{null}\subset\cA_6$, namely $C_1\in\cT_6$. Moreover, since $\cT^e_6$ is closed under the tetragonal construction (see \cite[Proposition~7.2]{lnr}), it follows that $(C_1,\eta_1)\in\cT^o_6$, which finishes the proof.
\end{proof}

Since the general $C\in\cD$ has a unique $g^1_4$ whose adjoint defines a plane with a tritangent line (see \autoref{prop:gen1Tritang}), we obtain the following part of \autoref{mainthm}:

\begin{thm}\label{thm:genInjonTo}
Let $\cZ\subset\cA_5$ the divisor obtained as the closure of $\cP_6(\cT^o_6)$. Then:
\begin{enumerate}
  \item\label{item1:genInjonTo} The restricted Prym map $\cP_6|_{\cT_6^o}$ is generically injective.
  \item\label{item2:genInjonTo} $\pi^{-1}\cD\subset\cP_6^{-1}(\cZ)$, and $\deg\left(\cP_6|_{\pi^{-1}\cD}\right)=10$.
\end{enumerate}
\end{thm}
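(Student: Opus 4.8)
\textbf{Proof plan for \autoref{thm:genInjonTo}.}

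The plan is to combine \autoref{prop: how tetr works in T_odd}, \autoref{prop: how tetr works in D} and \autoref{prop:gen1Tritang} with the structure of the 27-line fiber of $\cP_6$ recalled in \autoref{prymmapgenus6}. First I would fix a general $(C_1,\eta_1)\in\cT^o_6$ and argue that its image $JC_1=\cP_6(C_1,\eta_1)$ is not in the branch locus of $\cP_6$, so that $\cP_6^{-1}(JC_1)$ consists of 27 distinct smooth Prym curves with the incidence structure of the 27 lines on a cubic surface (by the discussion at the end of \autoref{sec:Compactification}, the general fiber over $\cZ$ lies entirely in $\cR_6$). For each of the five $g^1_4$'s $M_1$ on $C_1$, \autoref{prop: how tetr works in D} is reversed by \autoref{prop: how tetr works in T_odd}: the tetragonal construction applied to $(C_1,\eta_1,M_1)$ produces two Prym curves $(C_2,\eta_2),(C_3,\eta_3)\in\pi^{-1}\cD$, each carrying a distinguished ``tritangent'' $g^1_4$. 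So from $(C_1,\eta_1)$ we reach exactly 10 elements of $\pi^{-1}\cD$ inside the fiber; in the cubic-surface picture these are the 10 lines meeting the line corresponding to $(C_1,\eta_1)$.

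Next I would run the converse. Start with a general $(C_2,\eta_2)\in\pi^{-1}\cD$; by \autoref{prop:gen1Tritang} it has a \emph{unique} $g^1_4$ $M_2$ whose adjoint series gives a plane sextic with a tritangent line, i.e.\ a unique $M_2$ with $(C_2,M_2)\in\cD'$. Applying the tetragonal construction to $(C_2,\eta_2,M_2)$, \autoref{prop: how tetr works in D} yields one element in $\cT^o_6$ and one more element in $\pi^{-1}\cD$ (with its tritangent $g^1_4$). If $M_2$ were not the tritangent series, the construction would not land in $\cT^o_6$, by the analysis in \autoref{prop: how tetr works in D} and \autoref{prop: how tetr works in T_odd}. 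Hence the unique line adjacent to $(C_2,\eta_2)$'s line that lands in $\cT^o_6$ is obtained from this one special $g^1_4$. This shows that, in the incidence graph of the fiber, each line labeled by $\pi^{-1}\cD$ is adjacent to exactly one line labeled by $\cT^o_6$. Since $\cP_6|_{\cT^o_6}$ and $\cP_6|_{\pi^{-1}\cD}$ are generically finite onto their images (the former by \cite{lnr}), the fiber of $\cP_6|_{\cT^o_6}$ over $JC_1$ is the set of $\cT^o_6$-lines in $\cP_6^{-1}(JC_1)$, and likewise for $\pi^{-1}\cD$.

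To finish, I would do the counting on the 27 lines. Let $a$ be the number of lines in the fiber lying in $\cT^o_6$ and $b$ the number lying in $\pi^{-1}\cD$. Each of the $a$ lines meets exactly 10 of the $b$ lines (first paragraph), and each of the $b$ lines meets exactly one of the $a$ lines (second paragraph); counting incidences between these two sets gives $10a=b$. On a cubic surface every line meets exactly 10 others, and the 10 neighbours of a fixed $\cT^o_6$-line are exactly the $b$-lines it meets, which are all in $\pi^{-1}\cD$; conversely a $\pi^{-1}\cD$-line has a unique $\cT^o_6$-neighbour, so its other 9 neighbours are \emph{not} in $\cT^o_6$. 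The only configuration in the $E_6$ incidence graph consistent with ``some line has all 10 neighbours outside its own class and each neighbour has exactly one neighbour back'' is $a=1$, $b=10$: indeed if $a\geq 2$ two $\cT^o_6$-lines would each need 10 neighbours in $\pi^{-1}\cD$ with the one-way adjacency, forcing $b\geq 20$ and then $10a=b$ gives $a\geq 2$, $b=10a\geq 20$, but a set of $\geq 2$ mutually—this is the delicate point—one checks directly on the cubic surface that a line together with its 10 transversals is not preserved: two disjoint lines have exactly 5 common transversals, so two $\cT^o_6$-lines would share transversals and the count $10a=b$ would be violated unless $a=1$. Hence $a=1$, proving \eqref{item1:genInjonTo}, and $b=10$, giving $\deg\left(\cP_6|_{\pi^{-1}\cD}\right)=10$ and in particular $\pi^{-1}\cD\subset\cP_6^{-1}(\cZ)$, which is \eqref{item2:genInjonTo}.

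The main obstacle is the last combinatorial step: making rigorous that the two local statements (``each $\cT^o_6$-line sees 10 $\cD$-lines'' and ``each $\cD$-line sees exactly one $\cT^o_6$-line'') force $a=1$ rather than merely $10a=b$. I expect the clean way to do this is to use Donagi's result (\autoref{prymmapgenus6}(2)) that the monodromy over $\cA_5$ is all of $WE_6$, so that the partition of the 27 lines into the classes $\cT^o_6$, $\pi^{-1}\cD$, $\mathcal L$ is monodromy-invariant over $\cZ$ but the full $WE_6$ acts transitively on lines; one then identifies $\{\cT^o_6\text{-lines}\}$ with a single line's orbit under the residual monodromy, and the adjacency count pins down that this is a \emph{single} line (its complement being the unique 16-line double-six-type set plus the 10 transversals). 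In the write-up this is where I would invoke that $\cD$ has a unique tritangent $g^1_4$ (\autoref{prop:gen1Tritang}) one final time to exclude the possibility that two distinct $\cT^o_6$-points in the fiber could be tetragonally linked to the same $\cD$-point, which would be the only way to get $a>1$.
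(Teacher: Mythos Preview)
Your plan uses the same ingredients as the paper's proof (the two tetragonal propositions, the uniqueness of the tritangent $g^1_4$, and the $27$-line incidence), and the logic up through your double-counting step is identical. The only real difference is the final combinatorial step, which you make harder than necessary.

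The paper argues directly. Having fixed a general $(C_1,\eta_1)\in\cT^o_6$ and established that its ten neighbours lie in $\pi^{-1}\cD$, pick any one neighbour pair $(C_2,\eta_2),(C_3,\eta_3)$. Their eight non-tritangent $g^1_4$'s produce, via the tetragonal construction, exactly the sixteen lines skew to $(C_1,\eta_1)$; by \autoref{prop: how tetr works in T_odd}, none of these sixteen can lie in $\cT^o_6$ (otherwise the $g^1_4$ used on $C_2$ or $C_3$ would have to be tritangent). The ten neighbours themselves are in $\pi^{-1}\cD$, and since $\cT^o_6\neq\pi^{-1}\cD$ as divisors in $\cR_6$ (the latter is pulled back from $\cM_6$, the former is not), for a general fiber over $\cZ$ none of the ten lies in $\cT^o_6$ either. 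Hence $(C_1,\eta_1)$ is alone, and its ten neighbours are exactly the $\pi^{-1}\cD$-elements of the fiber. No $10a=b$ counting is needed.

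That said, your own common-transversal argument is correct and does force $a=1$: two $\cT^o_6$-lines would have to be skew (if adjacent, one would lie in $\cT^o_6\cap\pi^{-1}\cD$, excluded generically as above), and then any of their five common transversals would be a $\cD$-line with two distinct $\cT^o_6$-neighbours, contradicting what you proved in your step~3. So the ``main obstacle'' you flag is not one; you should trust that argument. Your alternative suggestion of invoking monodromy, however, must be dropped: the monodromy of $\cP_6$ over $\cZ$ is computed \emph{later} in the paper using the present theorem as input, so appealing to it here would be circular. (Minor slip: $\cP_6(C_1,\eta_1)$ is the Prym variety $P(C_1,\eta_1)$, not $JC_1$.)
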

\begin{proof}
 Recall that, in the general fiber of $\cP_6$, the tetragonal relation equals the incidence relation on the 27 lines of a cubic surface. 
 
 By \autoref{prop: how tetr works in T_odd}, starting with an element of $(C_1,\eta_1)\in\cT^o_6$, we obtain 10 elements of the fiber which lie in $\pi^{-1}\cD$ (by tetragonal construction with \emph{any} of the five $g^1_4$'s of $C_1$). 
 To determine the rest of the fiber, we must fix \emph{any} pair $(C_2,\eta_2),(C_3,\eta_3)$ tetragonally related to $(C_1,\eta_1)$, and apply the tetragonal construction to $(C_2,\eta_2)$ and $(C_3,\eta_3)$ by using the rest of the $g^1_4$'s on $C_2$ and $C_3$. 
 By \autoref{prop:gen1Tritang}, $C_2$ and $C_3$ have no more ``tritangent $g^1_4$'s", hence the tetragonal construction can't bring us back to $\cT^o_6$ (again by \autoref{prop: how tetr works in T_odd}). Therefore $(C_1,\eta_1)$ is the unique element of the fiber which lies on $\cT^o_6$, which proves \eqref{item1:genInjonTo}.
 
 In order to prove \eqref{item2:genInjonTo}, observe that for any $C\in\cD$ and $\eta\in JC_2\setminus\{\cO_C\}$, $(C,\eta)$ is tetragonally related to an element in $\cT_6^o$ (by \autoref{prop: how tetr works in D}). This implies $\pi^{-1}\cD\subset\cP_6^{-1}(\cZ)$. Furthermore, $\deg\left(\cP_6|_{\pi^{-1}\cD}\right)=10$, since \autoref{prop: how tetr works in T_odd} and \autoref{prop: how tetr works in D} show that elements of $\pi^{-1}\cD$ in the fiber are exactly those tetragonally related to the unique element of $\cT_6^o$.
\end{proof}

Also note that from \autoref{prop: trig locus} and \autoref{thm:genInjonTo}.\eqref{item2:genInjonTo} we directly obtain that
\begin{cor}\label{cor:ScontainsJ}
The Jacobian locus is contained inside $\cZ$.
\end{cor}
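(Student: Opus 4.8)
The plan is to combine the two results cited, \autoref{prop: trig locus} and \autoref{thm:genInjonTo}.\eqref{item2:genInjonTo}, essentially formally. First I recall what each gives: \autoref{prop: trig locus} asserts the inclusion $\cM_6^{trig}\subset\cD$ of loci in $\cM_6$, while \autoref{thm:genInjonTo}.\eqref{item2:genInjonTo} asserts $\pi^{-1}\cD\subset\cP_6^{-1}(\cZ)$ as subsets of $\cR_6$. Pulling the first inclusion back along the forgetful map $\pi\colon\cR_6\to\cM_6$ yields $\pi^{-1}\cM_6^{trig}\subset\pi^{-1}\cD$, and chaining with the second gives $\pi^{-1}\cM_6^{trig}\subset\cP_6^{-1}(\cZ)$. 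Taking images under $\cP_6$ (or $\cP_6'$ on the appropriate partial compactification, as set up in \autoref{sec:Compactification}) then shows $\cP_6\bigl(\pi^{-1}\cM_6^{trig}\bigr)\subset\cZ$.

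The only point that needs a word of justification is that the Jacobian locus in $\cA_5$ is precisely $\cP_6\bigl(\pi^{-1}\cM_6^{trig}\bigr)$, i.e. that every Jacobian of a genus $5$ curve arises as a Prym variety of an étale double cover of a trigonal genus $6$ curve. This is exactly the content of Recillas' trigonal construction recalled in \autoref{sec:tri_tetr_con}: the Prym variety of a cover $\tT\to T$ of a trigonal curve $T$ of genus $g+1$ is the Jacobian of a tetragonal curve $X$ of genus $g$, and conversely every such Jacobian (with a chosen $g^1_4$) is obtained this way; taking $g=5$ gives the claim. Since the generic genus $5$ curve is tetragonal (indeed has finitely many $g^1_4$'s), the trigonal construction surjects onto the full Jacobian locus $\cJ_5\subset\cA_5$.

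Putting these together, $\cJ_5=\cP_6\bigl(\pi^{-1}\cM_6^{trig}\bigr)\subset\cZ$, and since $\cZ$ is closed we may also take the closure, giving $\overline{\cJ_5}\subset\cZ$ if desired. I do not expect any genuine obstacle here: the statement is a direct corollary, and the only subtlety is bookkeeping about which moduli space each inclusion lives in and making sure the trigonal construction is invoked with the correct numerics ($g=5$, so trigonal curves of genus $6$). One could alternatively note that this also follows from the class computation $[\cZ]\propto 10584L-1320D$ recalled from \cite[Proposition~7.3]{lnr} together with the known fact that $\cJ_5$ is contained in $\theta_{null}$, but the argument via \autoref{prop: trig locus} and \autoref{thm:genInjonTo} is the cleanest and is the one I would write.
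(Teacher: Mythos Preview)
Your argument is correct and is exactly the approach the paper takes: the paper's entire proof is the sentence ``from \autoref{prop: trig locus} and \autoref{thm:genInjonTo}.\eqref{item2:genInjonTo} we directly obtain,'' and you have simply spelled out the chain $\pi^{-1}\cM_6^{trig}\subset\pi^{-1}\cD\subset\cP_6^{-1}(\cZ)$ together with the Recillas identification $\cP_6(\pi^{-1}\cM_6^{trig})=\cJ$.

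One small remark: your parenthetical alternative via the class $[\cZ]\propto 10584L-1320D$ and $\cJ_5\subset\theta_{null}$ does not actually yield the inclusion, since a divisor class in $\Pic(\ocA_5)_\bQ$ does not determine the support set-theoretically and $\theta_{null}$ is a different divisor from $\cZ$; but since you explicitly discard this route, it does not affect your proof.
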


\section{Monodromy argument}\label{sec:monodromy}
In this section we compute the monodromy group of $\cP_6^{-1}(\cZ)\to\cZ$. 
Note that \autoref{thm:genInjonTo}.\eqref{item1:genInjonTo} already shows that, as a subgroup of the monodromy $WE_6$ of the entire Prym map $\cP_6$, it is contained in the stabilizer $WD_5$ of a fixed line. 

We start by recalling the geometric objects in the fiber of $\cP_6$ over the Jacobian locus of $\cA_5$, as they are introduced in \cite{donsmith}. Then we adapt part of Donagi's strategy (\cite[Section 4]{do_fibres}) to our context.

Let $X$ be a general smooth curve of genus $5$. The canonical map embeds $X$ in $\bP^4=\bP H^0(X,\omega_X)^\vee$ as the complete intersection of three quadrics; we will denote by $I_X(2)$ the vector space of the equations of the quadrics containing $X$.
The singular quadrics in $|I_X(2)|=\bP^2$ define a smooth plane quintic $\Gamma$, which admits the Brill-Noether curve $W^1_4(X)\subset JX$ as an étale double cover; over a singular quadric cone $Q\in\Gamma$, one has two $g^1_4$'s (adjoint to each other) swept out by the two families of $2$-planes on $Q$.

By Recillas' construction, there is a trigonal $(T,\mu)\in \cR_6$ attached to any $M\in W^1_4(X)$. The isomorphism $P(T,\mu)\cong JX$ provides a natural identification $H^0(T,\omega_T\otimes\mu)\cong H^0(X,\omega_X)$ of their tangent spaces at the origin. By considering the Prym-canonical embedding of $T$ in
  \[
 \bP H^0(T,\omega_T\otimes\mu)^\vee=\bP^4=\bP H^0(X,\omega_X)^\vee,
  \]
quadrics of $|I_X(2)|$ cut on $T$ its unique basepoint-free $g^1_4$ (namely $K_T - 2 g^1_3$) plus a base locus which is the ramification locus $R$ of the $g^1_3$. Furthermore, the singular quadric $Q_M\in \Gamma$ associated to $M$ is the unique quadric containing both $X$ and $T$  (\cite[Part~III]{donsmith}).

Now let $\cR_6^{trig}\subseteq \cR_6'$ denote the closure in $\cR_6'$ of the locus of double covers of smooth trigonal curves, and let $\cJ\subset \cA_5$ denote the Jacobian locus.  
Let $JX$ be a general element in $\cJ$, and let $(T,\mu)\in \cR_6^{trig}$ be a smooth element in $\cP_6^{-1}(JX)$.
If $N$ denotes the $g^1_3$ on $T$, then we have canonical identifications: 
\[
\begin{aligned}
    & \cN^*_{\cJ/\cA_5,JX}\cong I_X(2), \\
    & \cN^*_{\cR_6^{trig}/\cR_6,(T,\mu)}\cong  H^0\big(T,\omega_T^2(-R)\big)\cong H^0\big(T,\omega_T\otimes N^{-2}\big)
\end{aligned}
\]
(see \cite{griff} and  \cite[\S III.4 \& Appendix]{donsmith}, respectively). 

\begin{prop}[{\cite[Corollary~4.3]{donsmith}}]\label{prop:normals_and_projection} 
    The projectivization of the codifferential of $\cP_6$ \begin{equation}\label{eqn:conormal}
  d \cP_6^*\colon \cN^*_{\cJ/\cA_5,JX}\to \cN^*_{\cR_6^{trig}/\cR_6,(T,\mu)}
\end{equation}
sends a quadric of $|I_X(2)|$ to its intersection with $T$ (minus the ramification $R$).
Namely, it is the projection from the quadric $Q_M\in\Gamma$ (containing both $X$ and $T$) to a supplementary line.
\end{prop}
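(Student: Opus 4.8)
The plan is to deduce this from the standard Kodaira--Spencer description of the period differential. First I would recall that, for any smooth Prym curve $(D,\nu)\in\cR_g$ with Prym variety $P=P(D,\nu)$, under the canonical identifications $T^*_{[P]}\cA_{g-1}\cong\Sym^2 H^0(D,\omega_D\otimes\nu)$ and $T^*_{(D,\nu)}\cR_g\cong H^0(D,\omega_D^2)$ (the latter via Serre duality, $H^1(D,T_D)^\vee\cong H^0(D,\omega_D^2)$), the codifferential of $\cP_g$ is the multiplication map
\[
\Sym^2 H^0(D,\omega_D\otimes\nu)\lra H^0\big(D,(\omega_D\otimes\nu)^{\otimes 2}\big)=H^0(D,\omega_D^2).
\]
This is classical and is the dual of the cup-product description of the period map (see \cite{be_invent}). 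Combined with the fact (\cite{griff}) that $\cN^*_{\cJ/\cA_5,JX}$ is the space $I_X(2)\subset\Sym^2 H^0(X,\omega_X)$ of quadrics through the canonical curve --- exactly the kernel of $\Sym^2 H^0(X,\omega_X)\to H^0(X,\omega_X^2)$ for a general $X$ of genus $5$ --- these are precisely the two conormal identifications occurring in the statement.

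The second step is to transport everything through the isomorphism $P(T,\mu)\cong JX$, which identifies $H^0(T,\omega_T\otimes\mu)\cong H^0(X,\omega_X)$ and thereby places the Prym-canonical model $T\subset\bP^4$ and the canonical curve $X\subset\bP^4$ in the same $\bP^4$. Under this identification the multiplication map above is simply ``restrict a quadric of $\bP^4$ to $T$'', $Q\mapsto Q|_T\in H^0(T,\omega_T^2)$. Since $\cP_6$ maps $\cR_6^{trig}$ into $\cJ$, functoriality of conormal bundles forces this to descend to the asserted map
\[
d\cP_6^*\colon\cN^*_{\cJ/\cA_5,JX}=I_X(2)\lra\cN^*_{\cR_6^{trig}/\cR_6,(T,\mu)}=H^0\big(T,\omega_T^2(-R)\big),\qquad Q\longmapsto Q|_T.
\]
That $Q|_T$ indeed lands in $H^0(T,\omega_T^2(-R))$ is then transparent from the projective geometry recalled before the statement: a quadric of $|I_X(2)|$ not containing $T$ meets the Prym-canonical $T$ along $D_Q+R$ with $D_Q\in|K_T-2N|$ (\cite[Part~III]{donsmith}), so $Q|_T$ vanishes along the fixed ramification $R$, and ``minus $R$'' is exactly the isomorphism $H^0(T,\omega_T^2(-R))\cong H^0(T,\omega_T\otimes N^{-2})$ under which $Q\mapsto D_Q$. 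This gives the first assertion.

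The last step is to recognise this linear map $\bC^3\to\bC^2$ as a projection from a point. A quadric $Q\in|I_X(2)|$ lies in the kernel precisely when $Q|_T\equiv 0$, i.e. $T\subset Q$; but $Q_M\in\Gamma$ is the \emph{unique} member of $|I_X(2)|$ containing both $X$ and $T$, so the kernel is the line $\langle Q_M\rangle$. Hence the map has rank $2$, is surjective, and its projectivization $\bP^2=|I_X(2)|\dasharrow\bP^1=|H^0(T,\omega_T^2(-R))|$ is the linear projection with centre $[Q_M]$ --- equivalently, the projection from the quadric $Q_M$ onto a supplementary line.

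I expect the genuine work to lie not in the kernel computation (immediate once the uniqueness of $Q_M$ is granted) but in the second step: verifying that the abstract Kodaira--Spencer codifferential, read through the two given conormal identifications, coincides with the concrete operation of restricting quadrics of $\bP^4$ to the Prym-canonical trigonal curve. This is exactly where one must invoke the projective geometry of \cite[Part~III]{donsmith} --- the Prym-canonical embedding of $T$ inside the canonical $\bP^4$ of $X$ and the description of how quadrics through $X$ cut $T$ --- and is the reason the proposition appears there as \cite[Corollary~4.3]{donsmith}.
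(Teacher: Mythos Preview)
The paper does not supply its own proof of this proposition: it is stated with the attribution \cite[Corollary~4.3]{donsmith} and used as a black box, with no proof environment following it. Your proposal is therefore not to be compared against an argument in the paper but against the original source. That said, your reconstruction is correct and is essentially the Donagi--Smith argument: identify the codifferential of $\cP_6$ with the multiplication map $\Sym^2 H^0(T,\omega_T\otimes\mu)\to H^0(T,\omega_T^2)$, transport through $H^0(T,\omega_T\otimes\mu)\cong H^0(X,\omega_X)$ so that it becomes restriction of quadrics to the Prym-canonical $T$, check via the geometry of \cite[Part~III]{donsmith} that quadrics of $I_X(2)$ cut $T$ in $R$ plus a divisor of $|K_T-2N|$, and read off the kernel as $\langle Q_M\rangle$ from the uniqueness of the quadric containing both curves.

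One small point worth tightening: the passage from the full codifferential to the map between conormal spaces is automatic once you know that $d\cP_6^*$ carries the subspace $I_X(2)\subset\Sym^2 H^0(X,\omega_X)$ into the subspace $H^0(T,\omega_T^2(-R))\subset H^0(T,\omega_T^2)$, since for a morphism sending a submanifold to a submanifold the codifferential always restricts to conormals. You verify this inclusion geometrically, which is fine; you might also note that it is forced abstractly by the fact that $\cP_6(\cR_6^{trig})\subset\cJ$, so no separate verification is strictly needed beyond identifying the two conormal spaces correctly.
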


\begin{rem}\label{delPezzoquadric}
It follows that the points of $\bP\big(\cN^*_{\cR_6^{trig}/\cR_6,(T,\mu)}\big)\cong\bP\big(\cN_{\cR_6^{trig}/\cR_6,(T,\mu)}\big)$ are in correspondence with degree $4$ del Pezzo surfaces containing $X$ and contained in $Q_{M}$.
\end{rem}

\vspace{3mm}

In \cite[Section 4]{do_fibres}, Donagi considers a blown up map $\tcP_6'$, obtained by blowing up $\cR_6'$ and $\cA_5$ along several geometric loci. This enables him to prove \autoref{prymmapgenus6}.\eqref{mongenus6}. The map $\tcP_6'$ is generically finite over the strict transform $\tcJ$ of the Jacobian locus, and he also proves that $WD_5$ is the monodromy of $\tcP_6'$ over $\tcJ$. 
For the convenience of the reader, we sketch the argument for the second assertion. The picture is the following:
\begin{equation}\label{donagiblowup}
\xymatrix{
\tcR_6^{trig}\subset \Bl_{\cR^{trig}_6\cup\cR\mathcal{Q}^+}\cR_6'\ar[d]\ar[rr]^(.6){\tcP_6'}&& \Bl_{\cJ}\cA_5 \supset \tcJ \ar[d]\\
\cR_6^{trig}\subset \cR_6'\ar[rr]_{\cP_6'}&&\cA_5\supset \cJ
}    
\end{equation}

Here $\cR\mathcal{Q}^+$ is a component of the locus of double covers of plane quintics, and $\tcP_6'$ is the blown up map from the blow up of $\cR_6'$ along $\cR_6^{trig}\cup \cR\mathcal{Q}^+$ to the blow up of $\cA_5$ along the Jacobian locus $\cJ$.
We denote by $\widetilde{\cR\mathcal{Q}}^+$, $\tcR_6^{trig}$ and $\tcJ$ the corresponding exceptional divisors. Then:

\begin{itemize}
    \item The fiber of $\tcJ$ over a general Jacobian $JX\in\cJ$ is $\bP\big(\cN_{\cJ/\cA_5,JX}\big)\cong |I_X(2)|^\vee$, namely the set of degree 4 del Pezzo surfaces containing $X$.

    \item The fiber of $\tcR_6^{trig}$ over a general $(T,\mu)\in\cR_6^{trig}$ is $\bP\big(\cN_{\cR_6^{trig}/\cR_6,(T,\mu)}\big)$. By \autoref{delPezzoquadric} this is the set of degree 4 del Pezzo surfaces containing $X$ and contained in the singular quadric $Q_M$, where $M\in W^1_4(X)$ is obtained from $(T,\mu)$ by Recillas' construction.
\end{itemize}

If $\cP_6(T,\mu)=JX$, then the induced map at the level of exceptional divisors is clear in terms of this description.

Let $(X,S)$ be a general element in $\tcJ$, namely $X\in\cM_5$ is general and $S$ is a degree 4 del Pezzo surface containing the canonical model of $X$. The preimage under $\tcP_6'$ of $(X,S)$ consists of 27 points: one element in $\widetilde{\cR\mathcal{Q}}^+$, its 10 tetragonally related elements (all of them in $\tcR_6^{trig}$) and 16 Wirtinger covers (corresponding to the 16 lines in $S$).
Therefore, the monodromy group of $\tcP_6'$ over $\tcJ$, as a subgroup of the entire monodromy $WE_6$, is contained in $WD_5$. 

In order to conclude that it equals $WD_5$, one observes that by fixing a general curve $X\in \cM_5$ and considering all degree 4 del Pezzo surfaces containing $X$, one obtains all (isomorphism classes of) degree $4$ del Pezzo surfaces. Since the monodromy group of the universal family of 16 lines on smooth degree 4 del Pezzo surfaces is isomorphic to $WD_5$, the assertion follows.

Now we adapt these ideas to study the monodromy of the Prym map restricted to the preimage of $\cZ$. We prove the following theorem which is part of \autoref{mainthm}. 

\begin{thm}\label{thm:monodromyOverZ} The monodromy of $\cP_6\colon \cP_6^{-1}\cZ\to \cZ$ is $WD_5$.
\end{thm}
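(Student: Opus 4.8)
The plan is to mimic Donagi's blow-up strategy described just above, but with $\cZ$ in place of the whole target $\cA_5$, exploiting the crucial fact (\autoref{cor:ScontainsJ}) that the Jacobian locus $\cJ$ is contained in $\cZ$. Since we already know from \autoref{thm:genInjonTo} that the monodromy of $\cP_6^{-1}\cZ\to\cZ$ is contained in $WD_5$, it suffices to exhibit enough monodromy. I would work with Beauville's partial compactification and restrict Donagi's diagram \eqref{donagiblowup}: let $\widetilde\cZ\subset\Bl_\cJ\cA_5$ be the strict transform of $\cZ$, and let $\widetilde{\cP_6^{-1}\cZ}$ be the restriction over $\widetilde\cZ$ of the blown-up Prym map $\tcP_6'$. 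Because the generic fiber of $\cP_6$ over $\cZ$ lies entirely in $\cR_6$ and consists of $27$ distinct covers (as recalled in \autoref{sec:Compactification}), and because by \autoref{thm:genInjonTo} exactly one of them lies in $\cT^o_6$ while ten lie in $\pi^{-1}\cD$, the monodromy over $\widetilde\cZ$ and over $\cZ$ coincide, so it is harmless to pass to the blow-up. The gain is that the fiber of $\widetilde\cZ$ over a general Jacobian $JX\in\cJ$ now carries the del Pezzo picture: it is the linear system $|I_X(2)|^\vee$ intersected with the (projectivized) conormal directions to $\cZ$, i.e. a proper linear subspace of the $\bP^2$ of degree $4$ del Pezzo surfaces containing the canonical curve $X$.

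The key geometric step is to identify which del Pezzo surfaces appear, i.e. to understand $\bP(\cN_{\cJ/\cZ,JX})\subset\bP(\cN_{\cJ/\cA_5,JX})=|I_X(2)|^\vee$. For this I would use \autoref{prop:normals_and_projection}: the codifferential $d\cP_6^*$ on conormal bundles is the projection from the singular quadric $Q_M\in\Gamma$ (the quadric containing both $X$ and the Prym-canonical model of $T$). Dually, the conormal direction to $\cZ$ inside $\cN^*_{\cJ/\cA_5,JX}$ pulls back, under $d\cP_6^*$ for a branch $(T,\mu)\in\cR_6^{trig}$ over a point of $\cT^o_6$-type, to the conormal direction of $\cT^o_6$ (equivalently of $\cC_b^{(3,4)}$, via $\cN^*_{\cR_6^{trig}/\cR_6,(T,\mu)}\cong H^0(T,\omega_T\otimes N^{-2})$). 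By \autoref{prop:transversality}, $\cC_b^{(3,4)}$ is a \emph{generically reduced} divisor, hence this conormal direction is a single well-defined point of $\bP H^0(T,\omega_T\otimes N^{-2})$, and transversality of $\cT^o_6$ with $\cR_6^{trig}$ follows. Pulling this back through the projection of \autoref{prop:normals_and_projection} shows that $\bP(\cN_{\cJ/\cZ,JX})$ is a \emph{hyperplane} $\bP^1\subset|I_X(2)|^\vee=\bP^2$, namely the pencil of degree $4$ del Pezzo surfaces through $X$ that are contained in \emph{some} quadric arising this way. Concretely, $\widetilde{\cP_6^{-1}\cZ}\to\widetilde\cZ$ over $\widetilde\cJ$ has, over $(X,S)$ with $S$ in this pencil, the expected $27$ points distributed as $1+10+16$ exactly as in Donagi's count, with the $16$ corresponding to the lines on $S$.

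With this in hand, the monodromy computation runs parallel to Donagi's: I would show that as $X\in\cM_5$ varies over a general family and $S$ varies in the associated pencil of del Pezzo surfaces, one still obtains \emph{all} isomorphism classes of smooth degree $4$ del Pezzo surfaces (a dimension count: the $\bP^1$-worth of del Pezzos through a fixed general $X$ cannot be constant in moduli, since the total space dominates the $4$-dimensional moduli of degree $4$ del Pezzos once $X$ is allowed to vary over $\cM_5$, which is $12$-dimensional, while the fibers of ``forget $X$'' are finite). Since the monodromy of the universal family of $16$ lines on smooth degree $4$ del Pezzo surfaces is all of $WD_5$, the monodromy of $\widetilde{\cP_6^{-1}\cZ}\to\widetilde\cZ$ contains $WD_5$; combined with the reverse inclusion from \autoref{thm:genInjonTo}, and with the identification of monodromy before and after blow-up, this gives $WD_5$ exactly.

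The main obstacle I anticipate is the middle paragraph: precisely controlling the image $\bP(\cN_{\cJ/\cZ,JX})\subset|I_X(2)|^\vee$ and verifying that it is genuinely a line (not a point, which would collapse the del Pezzo family and kill the monodromy argument). This is where \autoref{prop:transversality} and the del Pezzo reinterpretation of \autoref{delPezzoquadric} do the real work, and where one must rule out that $\cZ$ is tangent to $\cJ$ along unwanted directions or that $\cT^o_6$ fails to meet $\cR_6^{trig}$ transversally — exactly the ``del Pezzo surfaces and transversality'' issues flagged in the introduction. A secondary technical point is bookkeeping at the boundary: checking that over a general Jacobian the relevant covers are smooth (not admissible covers with nodes), so that the clean del Pezzo description applies and no spurious components of $\cP_6^{-1}\cZ$ intervene in the fiber.
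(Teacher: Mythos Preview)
Your overall architecture matches the paper's proof closely: pass to the blow-up along $\cJ\subset\cZ$, use the birationality $\cT^o_6\to\cZ$ together with transversality of $\cR_6^{trig}\cap\cT^o_6$ (from \autoref{prop:transversality}) to identify $\bP(\cN_{\cJ/\cZ,JX})$ with a line in $|I_X(2)|^\vee$, and then argue via the monodromy of lines on degree $4$ del Pezzo surfaces. Two genuine gaps remain, however.

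First, you implicitly assume that over a \emph{general} $JX\in\cJ$ there exists a branch $(T,\mu)\in\cR_6^{trig}\cap\cT^o_6$; this is not automatic and requires proving that $\cP_6'|_{\cR_6^{trig}\cap\cT^o_6}$ dominates $\cJ$. The paper establishes this separately (\autoref{prop:dominant}) via an intersection-theoretic computation on $X^{(4)}$, showing that a general genus $5$ curve $X$ admits a $g^1_4$ whose adjoint has a divisor $2p+2q$. Without it, your normal-bundle identification is only valid over a proper sublocus of $\cJ$.

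Second, and more seriously, your dimension count in the final step is incorrect. The moduli of smooth degree $4$ del Pezzo surfaces is $2$-dimensional (five points on $\bP^1$ modulo $\mathrm{PGL}_2$), not $4$-dimensional; and the fibers of ``forget $X$'' are far from finite, since $|{-2K_S}|$ on a fixed del Pezzo $S$ is $12$-dimensional. So the $13$-dimensional total space could in principle map to a curve or a point in del Pezzo moduli. The paper closes this gap with a direct geometric argument: it first identifies the pencil $\bP(\cN_{\cJ/\cZ,JX})$ as the del Pezzo surfaces contained in the specific quadric cone $Q_M$ (where $\omega_X\otimes M^{-1}$ has a divisor $2p+2q$), i.e.\ those $S$ admitting a $2$-plane $\pi$ with $S\cdot\pi=2p+2q$; then it shows that \emph{every} smooth degree $4$ del Pezzo $S\subset\bP^4$ has such a bitangent $2$-plane, by intersecting the hyperplane spanned by $q$ and $T_pS$ with the hyperplane spanned by $p$ and $T_qS$ for arbitrary $p,q\in S$. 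This explicit construction is what actually forces all isomorphism classes of del Pezzo surfaces to appear in $\widetilde\cZ\cap\tcJ$, and your dimension heuristic cannot replace it.
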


Since $\cZ$ contains the Jacobian locus $\cJ$ by \autoref{cor:ScontainsJ}, we can consider $\hcZ=\Bl_\cJ\cZ$ which is the strict transform of $\cZ$ in $\Bl_\cJ\cA_5$.
If we want to study the monodromy over $\cZ$, it is enough to consider paths inside $\hcZ\cap \tcJ$ (the exceptional divisor of $\Bl_\cJ\cZ=\hcZ$).

To this end, it is absolutely essential to understand the normal bundle $\cN_{\cJ/\cZ}$ at a general point of $\cJ$.
Naively, we can consider the diagram
\begin{equation*}\label{blowup}
\xymatrix{
\widetilde{\cR_6^{trig}\cap \cT_6^o} \subset \Bl_{\cR_6^{trig}\cap \cT_6^o}\cT_6^o \ar[d]\ar[r]&\Bl_\cJ\cZ\supset \tcJ\cap \hcZ \ar[d]\\
	\cR_6^{trig}\cap \cT_6^o\subset \cT_6^o\ar[r]_{\cP'_6} & \cZ\supset \cJ.
}
\end{equation*}
(we warn the reader that this diagram is not the analogue of \eqref{donagiblowup}, where the entire blown up map is depicted). Since $\cP_6'$ defines a birational map between $\cT^o_6$ and $\cZ$, we can identify the normal bundle $\cN_{\cJ/\cZ}$ at a general point of $\cJ$ with the normal bundle $\cN_{\cR_6^{trig}\cap \cT_6^o/\cT_6^o}$. 

Before going to the proof of \autoref{thm:monodromyOverZ}, we need two results about the intersection $\cR_6^{trig}\cap \cT_6^o$.

\begin{lem}\label{lem:trans}
  The intersection $\cR_6^{trig}\cap \cT_6^o$ is generically transverse.
\end{lem}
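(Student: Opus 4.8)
The plan is to compare the tangent spaces (equivalently, the conormal spaces) of the two divisors $\cR_6^{trig}$ and $\cT_6^o$ at a general point of their intersection, and show that their conormal directions inside $\cR_6$ are not proportional. Since both are divisors in $\cR_6$ (or in the partial compactification $\cR_6'$), transversality at a general point of the intersection is equivalent to the statement that the intersection has the expected codimension $2$, which in turn is equivalent to: the two corank-$1$ codifferential conditions defining the strict transforms are independent. So first I would identify, at a general $(T,\mu)\in\cR_6^{trig}\cap\cT_6^o$, the conormal line $\cN^*_{\cR_6^{trig}/\cR_6,(T,\mu)}$ with $H^0(T,\omega_T\otimes N^{-2})$ as recalled before \autoref{prop:normals_and_projection} — i.e.\ with the linear system $|K_T-2g^1_3|=g^1_4$ realizing $T$ as a $(3,4)$-curve in $\bP^1\times\bP^1$ via $(|\omega_T\otimes N^{-2}|,|N|)$.

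Next I would describe the conormal direction of $\cT_6^o$ at the same point. Here I would use the identification of $\cT_6^o$ near $(T,\mu)$ via the semicanonical pencil: by \autoref{trigonal_constr_and_v2} (and its proof), $(T,\mu)\in\cT^o_6$ forces $\omega_C\otimes M^{-1}\cong\cO_C(2(x_1+x_2))$ for the trigonally-associated $(C,M)$, and the condition to remain in $\cT_6^o$ translates, on the trigonal side, into the condition that the $g^1_4$ fiber $2x+2y$ of the first projection $T\to\bP^1$ remains a perfect square. This is exactly the divisorial condition cutting out $\cC_b^{(3,4)}$ studied in \autoref{prop:transversality}. So the conormal direction of $\cT_6^o$, viewed inside $H^0(T,\omega_T\otimes N^{-2})=H^0(\bP^1\times\bP^1,\cO(0,4))$, is the differential of the "vertical bitangent" (biquadratic discriminant) condition at the point $P_0$ in that proof — and \autoref{prop:transversality} shows precisely that this differential is nonzero (the linear term $-32\alpha$ does not vanish), i.e.\ the condition is transverse to the chosen pencil $\cV$.

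The core of the argument is then to check that these two conormal lines are not equal, equivalently that $\cT_6^o$ is not tangent to $\cR_6^{trig}$ along the intersection, equivalently that $\cT_6^o\cap\cR_6^{trig}$ is not all of $\cR_6^{trig}$ — which is clear since $\cT_6^o$ is a proper divisor and $\cR_6^{trig}$ is irreducible and not contained in it (a general trigonal genus $6$ curve has no semicanonical pencil, let alone a Prym-odd one). But "not contained in" only gives that a general point of $\cR_6^{trig}$ is outside $\cT_6^o$; to get generic transversality \emph{along the intersection divisor} $\cR_6^{trig}\cap\cT_6^o$ I need the stronger infinitesimal statement. The cleanest route: pick the explicit pencil $\cV=\{P_\alpha\}$ from the proof of \autoref{prop:transversality}, which by construction is a curve in $\cC^{(3,4)}\cong$ (a slice of) $\cR_6^{trig}$ meeting $\cC_b^{(3,4)}$ (hence $\cT_6^o$) transversally at $\alpha=0$; this already exhibits a direction tangent to $\cR_6^{trig}$ at $P_0$ but transverse to $\cT_6^o$, hence the two conormal lines differ at the special point $P_0$, and therefore (both loci and the intersection being algebraic) they differ at a general point of $\cR_6^{trig}\cap\cT_6^o$. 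Concretely, I would argue: the restriction of the conormal-to-$\cT_6^o$ functional to the tangent line of $\cR_6^{trig}$ supplied by $\cV$ is, up to scalar, $\tfrac{d}{d\alpha}\big(d_\alpha\big)\big|_{\alpha=0}$, which by the computation $d_\alpha(1{:}0)=-16\alpha^2-32\alpha$ equals $-32\neq0$; so the pairing of the two conormals is nonzero at $P_0$, i.e.\ transversality holds at $P_0$, hence generically along the intersection.

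The main obstacle I anticipate is bookkeeping the identifications: showing that the tangent vector to $\cR_6^{trig}$ coming from the pencil $\cV$ of $(3,4)$-curves is genuinely transverse to the trigonal locus inside $\cR_6$ (not just inside $\cC^{(3,4)}$), and that the "semicanonical pencil persists" condition on the Prym side pulls back, under the trigonal construction and the conormal identification $\cN^*_{\cR_6^{trig}/\cR_6}\cong H^0(\omega_T^2(-R))\cong H^0(\omega_T\otimes N^{-2})$, to exactly the biquadratic-discriminant condition of \autoref{prop:transversality}. Once these compatibilities are in place, the nonvanishing of the linear term in the proof of \autoref{prop:transversality} does all the work. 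I would also remark that \autoref{althypersurface} gives an independent confirmation: the vertical-bitangent hypersurface in $|\cO_{\bP^1\times\bP^1}(3,4)|$ is reduced, so $\cC_b^{(3,4)}$ (hence $\cT_6^o$) is generically reduced and meets a general pencil — and in particular a general trigonal one-parameter family — transversally.
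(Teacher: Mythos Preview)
Your endgame is right and matches the paper's: both arguments terminate in \autoref{prop:transversality}, using the explicit pencil $\cV$ to exhibit a tangent direction to the trigonal locus that is transverse to the semicanonical condition. The gap is in how you identify $\cT_6^o\cap\cR_6^{trig}$ with $\cC_b^{(3,4)}$.

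You invoke \autoref{trigonal_constr_and_v2}, which produces a condition on the \emph{genus-$5$} Recillas curve $X$, namely $\omega_X\otimes M^{-1}\cong\cO_X(2p+2q)$. You then assert this ``is exactly the divisorial condition cutting out $\cC_b^{(3,4)}$,'' but $\cC_b^{(3,4)}$ is a condition on the \emph{genus-$6$} curve $T$ itself --- that its own $g^1_4=|\omega_T\otimes N^{-2}|$ has a fiber of the form $2x+2y$. These live on different curves with different linear series, and nothing you wrote bridges them. You correctly flag this as ``the main obstacle'' in your last paragraph, but the route through \autoref{trigonal_constr_and_v2} and the genus-$5$ side does not resolve it; it lands on the wrong curve.

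The paper closes this gap by a shorter argument that avoids Recillas and Prym--Brill--Noether entirely. First, since $\pi:\cR_6\to\cM_6$ is \'etale, transversality of $\cR_6^{trig}\cap\cT_6^o$ reduces to transversality of $\cM_6^{trig}\cap\cT_6$ (no $\eta$, no parity). Then one shows directly that every semicanonical pencil $L$ on a non-hyperelliptic trigonal $T\in\cM_6$ with $g^1_3$ $N$ is of the form $L=N(x+y)$: by Riemann--Roch this amounts to $h^0(L\otimes N)\geq4$, which follows from the basepoint-free pencil trick applied to the multiplication map $H^0(L)\otimes H^0(N)\to H^0(L\otimes N)$. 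Squaring $L=N(x+y)$ gives $\omega_T\otimes N^{-2}\cong\cO_T(2x+2y)$, which \emph{is} the $\cC_b^{(3,4)}$ condition on $T$, and then \autoref{prop:transversality} finishes. (Incidentally, $\cN^*_{\cR_6^{trig}/\cR_6,(T,\mu)}\cong H^0(T,\omega_T\otimes N^{-2})$ is two-dimensional, not a line as you wrote; but this is cosmetic compared to the main issue.)
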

\begin{proof}
Since $\cR_6\to \cM_6$ is \'etale, it is enough to prove that the intersection $\cM_6^{trig}\cap \cT_6$ is transverse. 
We claim that the general point in $\cM_6^{trig}\cap \cT_6$ consists of a trigonal $T\in \cM_6$ such that its unique $g^1_3$ $N$ satisfies that
$N(x+y)$ is a semicanonical pencil, for some $x,y\in T$.

Indeed, let $T\in\cM_6^{trig}$ be non-hyperelliptic and $N\in W_3^1(T)$, and suppose that $L$ is a semicanonical pencil on $T$. 
We want to prove that $L\otimes N^{-1}$ is effective. This is equivalent to $h^0(T,L\otimes N)\geq 4$, since 
$h^0(T,L\otimes N^{-1})=h^0(T,L\otimes N)-3$ by Riemann-Roch and Serre duality.

If the multiplication map
\[
\mu_{L,N}:H^0(L)\otimes H^0(N)\lra H^0(L\otimes N)
\]
is injective, then $h^0(L\otimes N)\geq 4$. Otherwise, by the basepoint-free pencil trick $0\neq\ker(\mu_{L,N})=H^0(N\otimes L^{-1}(B))$, where $B$ is the base locus of $L$. In particular, $B$ has degree $2$ and $L(-B)$ defines the $g^1_3$ $N$, which also proves the claim.

Our claim implies that the general point in $\cM_6^{trig}\cap \cT_6$ can be interpreted as a curve of bidegree $(3,4)$ in $\bP^1\times \bP^1$, whose $g^1_4$ admits a fibre of the form $2x+2y$. In other words, it gives a point of $\cC_b(3,4)$.
Thus, the transversality follows from \autoref{prop:transversality}.
\end{proof}

We also want to prove that the intersection $\cR_6^{trig}\cap \cT_6^o$ is still dominant over $\cJ$.

\begin{rem}\label{rem:trig_constr_T6o}
Let us observe that combining Recillas' birational map 
together with \autoref{trigonal_constr_and_v2}, we have that giving a general element of $\cR_6^{trig}\cap \cT_6^o$ is equivalent to giving a general pair $(X, M)$ such that $X\in \cM_5$, $M\in W^1_4(X)$ and $\omega_X=M(2p+2q)$ for some $p,q\in X$.
\end{rem}

\begin{prop}\label{prop:dominant}
  The map $\cP'_6\colon \cR_6^{trig}\cap \cT_6^o\to \cJ$ is generically finite, hence dominant. 
\end{prop}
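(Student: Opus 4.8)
I would show dominance by a dimension count, exhibiting that the fibers of $\cP_6'\colon \cR_6^{trig}\cap \cT_6^o\to \cJ$ are finite, and that the source has the same dimension as $\cJ$. By \autoref{rem:trig_constr_T6o}, a general element of $\cR_6^{trig}\cap \cT_6^o$ is equivalent to a general pair $(X,M)$ with $X\in\cM_5$, $M\in W^1_4(X)$, and $\omega_X\cong M(2p+2q)$ for some $p,q\in X$. First I would count parameters on the source: $\dim\cM_5=12$, then $W^1_4(X)$ is a curve for general $X$, contributing $1$, and finally the condition that $\omega_X\otimes M^{-1}$ be of the form $\cO_X(2p+2q)$ is one condition on the pair $(X,M)$ (the divisor class $\omega_X\otimes M^{-1}$ of degree $4$ must lie in the surface $2X^{(2)}\subset\Pic^4 X$, which is codimension $1$). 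Hence $\dim(\cR_6^{trig}\cap \cT_6^o)=12+1-1=12=\dim\cJ$, consistent with dominance. (Note this matches the expected codimension: $\cR_6^{trig}$ has codimension $\dim\cR_6-\dim\cM_5=15-12=3$ in $\cR_6$, and $\cT_6^o$ is a divisor, so a transverse intersection — which holds by \autoref{lem:trans} — has codimension $4$, i.e.\ dimension $15-4=11$; but one must be careful, since over $\cJ$ the Prym fibers of $\cR_6^{trig}$ are positive-dimensional. Here the cleaner route is the direct parameter count on pairs $(X,M)$.)

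**Reduction to a non-emptiness/finiteness statement.** Since $\cP_6'$ sends $(T,\mu)$, obtained by Recillas from $(X,M)$, to $JX$, the composite $\cR_6^{trig}\cap\cT_6^o\to\cJ$ factors (up to the Recillas birational identification) through $(X,M)\mapsto JX$, i.e.\ through the forgetful map $(X,M)\mapsto X$ followed by $X\mapsto JX$. The Torelli map $\cM_5\to\cJ$ is birational onto its image, so it suffices to prove that the locus
\[
\cB=\set{(X,M)\;|\;X\in\cM_5,\;M\in W^1_4(X),\;\omega_X\otimes M^{-1}\cong\cO_X(2p+2q)\text{ for some }p,q\in X}
\]
dominates $\cM_5$ with generically finite fibers. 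Generic finiteness of $\cB\to\cM_5$ is clear: over general $X$, $W^1_4(X)$ is a smooth curve, and imposing the one condition $\omega_X\otimes M^{-1}\in 2X^{(2)}$ cuts it down to finitely many points (one checks the condition is not identically satisfied on $W^1_4(X)$ — otherwise every $g^1_4$ on a general genus $5$ curve would have adjoint of the form $2p+2q$, which fails, e.g.\ because it would force an identity among theta-characteristics incompatible with the general genus-$5$ curve). So the crux is dominance, i.e.\ producing, for general $X\in\cM_5$, at least one $M\in W^1_4(X)$ with $\omega_X\otimes M^{-1}$ twice a divisor of degree $2$.

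**The main step: dominance.** This is where the real work lies, and I would prove it by an explicit construction rather than a pure dimension count (a dimension count alone would only show the image has dimension $12$, not that it is all of $\cM_5$). I would run the construction ``in reverse'': start with the genus $6$ side. Take a general plane sextic $S$ with four nodes and a tritangent line $\ell$, so $(S,M_S)\in\cD'$ where $M_S$ is the $g^1_4$ cut by lines through one node — wait, more precisely take the trigonal model: by \autoref{prop: trig locus} and its proof, a general trigonal $T\in\cM_6$ has $\omega_T\otimes N^{-2}$ a $g^1_4$, and by \autoref{trigonal_constr_and_v2} (applied in genus $6$, i.e.\ with $g=6$, noting the statement needs $g\ge 6$) the Recillas partner $C$ of a trigonal $(T,\mu)\in\cT^o_6$ is a genus $5$ curve $X$ with $\omega_X\otimes M^{-1}\cong\cO_X(2x_1+2x_2)$. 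Concretely: pick a general trigonal curve $T$ of genus $6$ with a vertical bitangent, i.e.\ a general point of $\cC_b^{(3,4)}$ (which is a genuine divisor by \autoref{prop:transversality}), lift it to $(T,\mu)\in\cR_6^{trig}\cap\cT_6^o$ using the argument from the proof of \autoref{lem:trans} (the bitangent fiber $2x+2y$ makes $N(x+y)$ a semicanonical pencil; choosing $\mu\in JT[2]$ appropriately — via the Riemann–Mumford relation as in the proof of \autoref{prop: how tetr works in D} — forces the odd parity), and apply Recillas to get $(X,M)$. Then by \autoref{trigonal_constr_and_v2}, $\omega_X\otimes M^{-1}\cong\cO_X(2p+2q)$, so $(X,M)\in\cB$. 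As $(T,\mu)$ varies over a general family inside $\cR_6^{trig}\cap\cT_6^o$ — which has dimension $12$ by \autoref{lem:trans} once we know the intersection is nonempty of the expected dimension, and the above construction exhibits this — the resulting $X$ varies in a $12$-dimensional family in $\cM_5$, hence is general. The one genuinely delicate point, which I flag as the main obstacle, is verifying that the family of such $(T,\mu)$ actually has dimension $12$ and maps \emph{dominantly} to $\cM_5$ rather than into a proper subvariety: a priori the ``vertical bitangent'' constraint on $T$ together with the odd-parity choice of $\mu$ could conspire so that the Recillas partners $X$ fill out less than all of $\cM_5$. To rule this out I would compute the differential of $(T,\mu)\mapsto X$ at one explicit point — equivalently, use \autoref{prop:normals_and_projection} together with \autoref{lem:trans}: transversality of $\cR_6^{trig}\cap\cT_6^o$ means the normal direction of $\cT_6^o$ at $(T,\mu)$ is a genuine nonzero element of $\cN^*_{\cR_6^{trig}/\cR_6,(T,\mu)}\cong H^0(T,\omega_T\otimes N^{-2})^\vee$, and via the codifferential $d\cP_6^*$ this pulls back from a nonzero (hence generic, after varying) direction in $I_X(2)^\vee\cong\cN^*_{\cJ/\cA_5,JX}$; chasing this shows the image of $\cR_6^{trig}\cap\cT_6^o$ is not contained in any proper subvariety of $\cJ$ through $JX$, which forces dominance. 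I expect this last transversality-to-dominance bookkeeping, rather than any single hard computation, to be the crux.
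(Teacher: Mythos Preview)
Your reduction via \autoref{rem:trig_constr_T6o} to showing that
\[
\cB=\set{(X,M)\mid X\in\cM_5,\;M\in W^1_4(X),\;\omega_X\otimes M^{-1}\cong\cO_X(2p+2q)\text{ for some }p,q}
\]
dominates $\cM_5$ matches the paper exactly. The gap is in your argument for dominance itself.

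Your ``construction'' paragraph is circular: the map $(T,\mu)\mapsto X$ you build (Recillas, then Torelli) \emph{is} the map $\cP_6'|_{\cR_6^{trig}\cap\cT_6^o}$ whose dominance you are trying to prove, so ``as $(T,\mu)$ varies, $X$ becomes general'' is just a restatement of the goal. You acknowledge this and fall back on a differential argument, but that argument conflates two different transversality statements. \autoref{lem:trans} says that $\cR_6^{trig}$ and $\cT_6^o$ meet transversally \emph{as subvarieties of $\cR_6$}; what you need is that the \emph{fiber direction} of $\cR_6^{trig}\to\cJ$ (namely the tangent line to $W^1_4(X)$ at $M$) is not contained in $T_{(T,\mu)}\cT_6^o$. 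These are unrelated: the first compares $T\cR_6^{trig}$ and $T\cT_6^o$ inside $T\cR_6$; the second asks about a specific line inside $T\cR_6^{trig}$. Your invocation of \autoref{prop:normals_and_projection} does not bridge this, since that proposition describes the codifferential $\cN^*_{\cJ/\cA_5}\to\cN^*_{\cR_6^{trig}/\cR_6}$ and says nothing about where the conormal of $\cT_6^o$ sits relative to the fiber direction. (Also, a minor slip: $\dim\cR_6^{trig}=\dim\cM_6^{trig}=13$, so its codimension in $\cR_6$ is $2$, not $3$; the transverse intersection then has the correct dimension $12$.)

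The paper closes the gap by a direct intersection-number computation on a general $X\in\cM_5$. Inside $X^{(4)}$ one intersects the locus $C^1_4$ of effective degree-$4$ divisors moving in a pencil with the image $\Delta_2$ of the map $X^2\to X^{(4)}$, $(p,q)\mapsto 2p+2q$. Using the standard classes $c^1_4=\tfrac{1}{2}(\theta^2-2x\theta)$ and $[\Delta_2]=4(32x^2+\theta^2-10x\theta)$ from \cite{acgh}, one gets $c^1_4\cdot[\Delta_2]=240$. Since this is finite and positive, for general $X$ there exist finitely many (and at least one) divisors $2p+2q$ with $h^0(\cO_X(2p+2q))\ge 2$, which is exactly the fiber of $\cB\to\cM_5$ over $X$. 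This simultaneously gives dominance and generic finiteness, and avoids any transversality bookkeeping.
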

\begin{proof}
  According to \autoref{rem:trig_constr_T6o}, we need to show that the forgetful map 
  \[\set{(X, M) \mid
  X\in \cM_5, \;M\in W^1_4(X), \;\omega_X=M(2p+2q) \text{ for some } p, q }\lra \cM_5\]
  is dominant (or equivalently, generically finite). 

  This follows from a cohomological computation. Indeed, let $C^1_4\subset X^{(4)}$ be the subvariety parametrizing effective divisors of degree 4 on $X$ which move in a linear series of dimension at least 1. By \cite[Theorem p.~326]{acgh}, its fundamental class in $N^1(X^{(4)})$ is
  \[c^1_4=\frac{1}{2}(\theta^2-2x\theta)\]
  where, as usual, $x$ is the class of $p+X^{(3)}$ for some point $p$, and $\theta$ is the class of the theta divisor of $JX$ (pulled back to $X^{(4)}$ via the Abel-Jacobi map).

  On the other hand, let $\Delta_2\subset X^{(4)}$ denote the image of the diagonal map 
  \[
  X^2\to X^{(4)}, \ p+q\mapsto 2p+2q.
  \]
  By \cite[Chap.~VIII, Prop.~5.1]{acgh}, we have that $\Delta_2= 4(32x^2+\theta^2-10x\theta)$.
  Therefore, it follows from \cite[Lemma 1]{kouvi} that \[c^1_4\cdot \Delta_2= 104x^2\theta^2+2\theta^4-24x\theta^3-128x^3\theta=240,\]
  which gives the finite degree we were looking for. 
\end{proof}

Now we are ready to prove the main theorem of this section.

\begin{proof}[Proof of \autoref{thm:monodromyOverZ}]
By \autoref{prop:dominant}, given $X$ a general smooth curve of genus 5,
there exists a finite number of $(T,\mu)\in \cR_6^{trig}\cap \cT_6^o$ (with $T$ smooth) such that $JX=P(T,\mu)$.

Let us choose such a $(T,\mu)\in \cR_6^{trig}\cap \cT_6^o$, and let $M\in W^1_4(X)$ be the $g^1_4$ obtained in $X$ by applying the trigonal construction to $(T,\mu)$.
First we claim that the map \eqref{eqn:conormal} factors through
\begin{equation*}
\cN^*_{\cJ/\cA_5,JX}\to \cN^*_{\cJ/\cZ,JX}\stackrel\cong\to \cN^*_{\cR_6^{trig}/\cR_6,(T,\mu)}.
\end{equation*}

The factorization is clear, thus we need to prove that the second morphism is bijective.
If $X$ is a general curve of genus $5$, it is enough to prove that $\cZ$ is not singular at $JX$. 
Since $\cZ$ is not contained in the branch divisor of $\cP_6$ and we have a birational map $\cT_6^o\to \cZ$ (by \autoref{thm:genInjonTo}), by \autoref{prop:dominant}, we get 
\[\cN^*_{\cJ/\cZ, JX}\cong \cN^*_{\cR_6^{trig}\cap \cT_6^o/ \cT_6^o, (T, \mu) }. \] 
Since the intersection $\cR_6^{trig}\cap \cT_6^o$ is generically transverse by \autoref{lem:trans}, we have 
\[
\cN^*_{\cR_6^{trig}\cap \cT_6^o/ \cT_6^o, (T, \mu) }\cong \cN^*_{\cR_6^{trig}/ \cR_6, (T, \mu) }
\]
which proves the claim. 

Now, as a consequence of the discussion after diagram \eqref{donagiblowup}, we have that the projectivized normal bundle
\[
\bP(\cN_{\cJ/\mathcal{Z},JX})\hookrightarrow \bP(\cN_{\cJ/\cA_5,JX})=\vert I_X(2)\vert ^\vee\]
is identified with the pencil of degree 4 del Pezzo surfaces $S$ containing $X$ and contained in the singular quadric $Q_{M}=Q_{\omega_X\otimes M^{-1}}\in \Gamma \subset |I_X(2)|$. Let us also recall that, by \autoref{rem:trig_constr_T6o}, the $g^1_4$ $\omega_X\otimes M^{-1}$ has a divisor of the form $2p+2q$. In other words, the quadric cone $Q_M$ contains a $2$-plane $\pi\subset Q_M$ such that $S\cap\pi$ is of the form $2p+2q$.

Finally, recall that by \autoref{thm:genInjonTo}.\eqref{item1:genInjonTo} the monodromy group of $\cP_6\colon \cP_6^{-1}\cZ\to \cZ$ is contained in $WD_5$.
Thus, if we prove that for any smooth degree 4 del Pezzo surface $S\subset \bP^4$ there exists a $2$-plane $\pi\subset \bP^4$ such that $S\cap\pi$ is of the form $2p+2q$, we will be done because, even considering paths inside $\hcZ\cap \tcJ$, we will still recover all (isomorphism classes of) degree $4$ del Pezzo surfaces, whose full symmetric group of line configuration is $WD_5$.

This holds true, since for every pair of distinct points $p, q\in S$, we can consider the 2-plane obtained as the intersection of:
\begin{itemize}
    \item The hyperplane spanned by $q$ and the tangent 2-plane to $S$ at $p$.
    \item The hyperplane spanned by $p$ and the tangent 2-plane to $S$ at $q$.
\end{itemize} 
Any quadric cone containing this $2$-plane will intersect $S$ in an appropriate genus 5 curve, which completes the proof.
\end{proof}

\begin{proof}[Proof of \autoref{mainthm}]
By \autoref{thm:monodromyOverZ} $\cP_6^{-1}(\cZ)$ consists of three irreducible components, corresponding to the three orbits of the action of $WD_5 \subset \mathfrak{S}_{27}$ on the general fiber of $\cP_6|_{\cP_6^{-1}(\cZ)}$. Furthermore, these three components dominate $\cZ$ with degrees 1, 10 and 16.

On the other hand, it follows from \autoref{thm:genInjonTo} that the irreducible component mapping with degree 1 must be $\cT^o_6$, and the irreducible component mapping with degree 10 must be $\pi^{-1}\cD$.
\end{proof}


\end{document}